\renewcommand{\epsilon}{\varepsilon}
\renewcommand{\emptyset}{\varnothing}
\newtheorem*{namedtheorem}{\theoremname}
\newcommand{\theoremname}{testing}
\newenvironment{named}[1]{\renewcommand{\theoremname}{#1}\begin{namedtheorem}}{\end{namedtheorem}}
\newtheorem{theorem}{Theorem}[section]
\newtheorem{proposition}[theorem]{Proposition}
\newtheorem{corollary}[theorem]{Corollary}
\newtheorem{lemma}[theorem]{Lemma}
\newtheorem{problem}[theorem]{Problem}
\theoremstyle{definition}
\newtheorem{example}[theorem]{Example}
\newtheorem{definition}[theorem]{Definition}
\theoremstyle{remark}
\newtheorem{remark}[theorem]{Remark}
\newcommand{\Z}{\mathbb Z}
\newcommand{\R}{\mathbb R}
\newcommand{\CI}{\mathcal I}
\newcommand{\SCI}{\mathrm S\mathcal I}
\newcommand{\GL}{\operatorname{GL}}
\newcommand{\SL}{\operatorname{SL}}
\newcommand{\cohom}[3]{H^{{\raise1pt\hbox{$\scriptstyle#1$}}}(#2\>\!,#3)}
\newcommand{\tatecohom}[3]%
  {\widehat H^{{\raise1pt\hbox{$\scriptstyle#1$}}}(#2\>\!,#3)}
\newcommand{\Cohom}[3]%
  {H^{{\raise1pt\hbox{$\scriptstyle#1$}}}\big(#2\>\!,#3\big)}
\newcommand{\Tatecohom}[3]%
  {\widehat H^{{\raise1pt\hbox{$\scriptstyle#1$}}}\big(#2\>\!,#3\big)}
\newcommand{\homol}[3]{H_{{\lower1pt\hbox{$\scriptstyle#1$}}}(#2\>\!,#3)}
\newcommand{\homolog}[2]{H_{{\lower1pt\hbox{$\scriptstyle#1$}}}(#2)}
\renewcommand{\implies}{\Rightarrow}
\DeclareMathOperator{\Out}{Out}
\DeclareMathOperator{\Mod}{Mod}
\DeclareMathOperator{\Aut}{Aut}
\DeclareMathOperator{\SAut}{SAut}
\DeclareMathOperator{\Inn}{Inn}
\newcommand{\lk}{\operatorname{lk}}
\newcommand{\st}{\operatorname{st}}
\begin{document}

\title[First cohomology of automorphism groups of RAAGs]{On the first cohomology  of automorphism groups of graph groups}

\author{Javier Aramayona \& Conchita~Mart\'inez-P\'erez}

\begin{abstract}

We study the (virtual) indicability of the automorphism group $\Aut(A_\Gamma)$ of the right-angled Artin group $A_\Gamma$ associated to a simplicial graph $\Gamma$. 
First, we identify two conditions -- denoted (B1) and (B2) --  on $\Gamma$ which together imply that $H^1(G, \Z)=0$ for certain finite-index subgroups $G<\Aut(A_\Gamma)$.  
On the other hand we will show that (B2) is equivalent to the matrix group ${\mathcal H} = {\rm Im}(\Aut(A_\Gamma) \to \Aut(H_1(A_\Gamma))) <\GL(n,\Z)$ not being virtually indicable, and also to $\mathcal H$ having Kazhdan's property (T). As a consequence, $\Aut(A_\Gamma)$ virtually surjects onto $\Z$ whenever $\Gamma$ does not satisfy (B2). 
In addition, we give an extra property of $\Gamma$  ensuring that  $\Aut(A_\Gamma)$ and $\Out(A_\Gamma)$ virtually surject onto $\Z$. 
Finally, in the appendix we offer some remarks on the linearity problem for $\Aut(A_\Gamma)$. 


\end{abstract}

\maketitle

\section{Introduction}

Automorphism groups of right-angled Artin groups (or {\em graph groups}, or {\em partially commutative groups}) form an interesting class of groups, as they ``interpolate" between the two extremal cases of $\Aut(F)$, the automorphism group of a  non-abelian  free group, and the general linear group $\GL(n,\Z)$. 

In this paper we study the (non-)triviality of the first cohomology group of $\Aut(A_\Gamma)$ and certain classes of its finite-index subgroups; here, $A_\Gamma$ denotes the right-angled Artin group defined by the simplicial graph $\Gamma$. Recall that a discrete group $G$ is said to be {\em virtually indicable} if there exists a subgroup $G_0<G$ of finite index with non-trivial first cohomology group; equivalently, $G_0$ admits a surjection onto $\Z$. We say that a compactly generated group $G$ has {\em Kazhdan's property (T)} if every unitary representation of $G$ that has almost invariant vectors has an invariant unit vector. 
Groups with  {\em Kazhdan's property (T)} are not virtually indicable, however the converse is not true; for instance, $\Aut(F_3)$ has finite abelianization but does not enjoy property (T) \cite{McCool,GL,BV}.


As is often the case with properties of (automorphisms of) right-angled Artin groups, whether $H^1(\Aut(A_\Gamma), \Z)$ vanishes or not depends on the structure of the underlying graph $\Gamma$. Below, we will identify a number of conditions on $\Gamma$ ensuring that $\Aut(A_\Gamma)$ has (non-)trivial first cohomology. These conditions are phrased on the usual partial ordering of the vertex set $V(\Gamma)$ of $\Gamma$. Namely, given  vertices $v,w \in V(\Gamma)$, we  say that $ v\le w$ if $\lk(v) \subset \st(w)$; see section \ref{s:defs} for an expanded definition. We write $v \sim w$ to mean $v\le w$ and $w \le v$.


\subsection{Finite abelianization} We first consider a property of a graph which guarantees that the partial ordering $\le$ is ``sufficiently rich".  More concretely, we say that a simplicial graph $\Gamma$ has property (B) if the following two conditions hold:
\begin{enumerate}
\item[(B1)] For all $u,v \in V(\Gamma)$ that are not adjacent, we have $u \sim v$;
\item[(B2)] For all $v,w \in V(\Gamma)$ with $v\le w$, there exists $u\in V(\Gamma)$ such that  $u \ne v,w$ and $v\le u\le w$. 
\end{enumerate} 

We will prove that if $\Gamma$ has property (B) then  a natural  class of finite-index subgroups of $\Aut(A_\Gamma)$ have finite abelianization. Before we state our first result, 
recall that the Torelli group $\CI A_\Gamma$ is the kernel of the natural homomorphism $\Aut(A_\Gamma) \to \Aut(H_1(A_\Gamma))= \GL(n,\Z)$ where $n$ denotes the number of vertices of $\Gamma$. In particular, we may see  $\Aut(A_\Gamma)/\CI A_\Gamma$ as a subgroup of $\GL(n,\Z)$.

Our first result is: 

\begin{theorem}
Let $\Gamma$ be a simplicial graph with property (B). If $G < \Aut(A_\Gamma)$ is a finite-index subgroup containing $\CI A_\Gamma$, then $H^1(G, \Z) = 0$.
\label{thma}
\end{theorem}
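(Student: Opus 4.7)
The plan is to prove $H^1(G,\Z)=0$ by showing that every homomorphism $\phi\colon G\to\Z$ is trivial. I would organize the argument around the short exact sequence
\[
1\to \CI A_\Gamma \to G \to \mathcal{H}_0 \to 1,
\]
where $\mathcal{H}_0 := G/\CI A_\Gamma$ is a finite-index subgroup of $\mathcal{H}=\Aut(A_\Gamma)/\CI A_\Gamma \subset \GL(n,\Z)$. The inflation-restriction (five-term) exact sequence reduces the theorem to the twin claims that $H^1(\mathcal{H}_0,\Z)=0$ and that $H^1(\CI A_\Gamma,\Z)^{\mathcal{H}_0}=0$.

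For the first claim I would exploit property (B2). The group $\mathcal{H}$ is generated (modulo the finite-order inversions and graph automorphisms) by the images of the Laurence--Servatius transvections $\tau_{v,w}$ for $v\le w$. Given such a pair, (B2) supplies an intermediate vertex $u$ with $v\le u\le w$, $u\ne v,w$. I would then invoke a standard Steinberg-type commutator relation of the form $[\overline{\tau}_{v,u},\overline{\tau}_{u,w}] = \overline{\tau}_{v,w}^{\pm 1}$ (valid in $\GL(n,\Z)$ for the elementary matrices corresponding to these transvections), so that every transvection is a commutator in $\mathcal{H}$. Since inversions and graph automorphisms are of finite order, this will force $\mathcal{H}^{\mathrm{ab}}$ to be torsion, and hence $H^1(\mathcal{H}_0,\Z)=0$ by passing to the finite-index subgroup and using that finite-index subgroups of groups with torsion abelianization also have torsion abelianization after a further finite-index pass (or more directly, that no finite-index subgroup of $\mathcal{H}$ surjects onto $\Z$).

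For the second claim, I would use property (B1). The Torelli group $\CI A_\Gamma$ is generated by partial conjugations $c_{v,C}$ (together with, in principle, commutators of certain transvections). Under (B1), non-adjacent vertices are $\sim$-equivalent, which severely restricts the components $C$ of $\Gamma\setminus \st(v)$: any $w\in C$ is non-adjacent to $v$ and hence $w\sim v$. I would exploit the conjugation action of suitable lifts in $\mathcal{H}_0$ -- inversions, transvections among $\sim$-equivalent vertices, and graph symmetries preserving the $\sim$-classes -- to move a given generator $c_{v,C}$ to its inverse or to a product of other generators. Since $\phi$ is conjugation-invariant (as $\Z$ is abelian), these relations will force $\phi(c_{v,C})=0$ for every generator of $\CI A_\Gamma$, yielding the vanishing of the $\mathcal{H}_0$-invariants.

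The main obstacle is the second claim: having a sufficiently explicit generating set of the Torelli group $\CI A_\Gamma$ under the hypothesis (B1), and verifying that the conjugation relations available in $\mathcal{H}_0$ indeed kill each generator on the nose (as opposed to only up to finite order, which is not enough for the invariants of a $\Z$-module). The correct bookkeeping of which partial conjugations survive under (B1), and the careful choice of conjugating elements (which must lie in the finite-index subgroup $\mathcal{H}_0$, not merely in $\mathcal{H}$), is where the real work lies; one may need to replace generators by their powers that do belong to $\mathcal{H}_0$ and argue that this does not affect vanishing in the torsion-free target $\Z$.
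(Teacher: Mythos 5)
Your reduction via the five-term exact sequence is fine as bookkeeping, but the first of your two claims contains a genuine logical gap. The Steinberg-type commutator argument under (B2) shows only that $\mathcal H$ itself is perfect (up to the finite-order generators), i.e.\ that $\mathcal H^{\mathrm{ab}}$ is torsion. It does \emph{not} follow that an arbitrary finite-index subgroup $\mathcal H_0=G/\CI A_\Gamma$ has torsion abelianization: finite abelianization does not pass to finite-index subgroups (e.g.\ $A_5*A_5$ is perfect and virtually free, hence virtually surjects onto $\Z$; closer to home, $\Aut(F_3)$ has finite abelianization yet is virtually indicable). Your parenthetical ``no finite-index subgroup of $\mathcal H$ surjects onto $\Z$'' is exactly the non-virtual-indicability of $\mathcal H$, which is the hardest statement in this circle of ideas (it is Theorem \ref{prop1}, (i)$\Rightarrow$(iii), proved in the paper via property (T) and a delicate induction in Proposition \ref{propertyT}); you cannot simply assert it. The paper's proof of Theorem \ref{thma} sidesteps this by a different route: it uses the Bass--Milnor--Serre congruence subgroup property for $\SL(n_i,\Z)$, $n_i\ge 3$ (this is where the condition $n_i\neq 2$ from Lemma \ref{propertyB} enters), to show that the congruence subgroup $\mathcal H(m)$ is normally generated by $m$-th powers of transvections (Lemma \ref{m-torelligen}); hence the concrete subgroup $\SCI A_\Gamma(m)$, for $m=[\Aut(A_\Gamma):G_1]$ with $G_1\le G$ normal, sits inside $G$ and is normally generated by $\CI A_\Gamma$ together with $m$-th powers of transvections, whose images in the abelianization are then killed by explicit commutator identities. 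Some such input (congruence subgroup property or property (T)) is indispensable; without it your first claim does not close.

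Your second claim is closer to workable but is still only a plan. The key identity you would need is the ``crossed lantern'' relation of Lemma \ref{crossed}: under (B1) every component of $\Gamma-\st(v)$ is a single vertex, and one has $c_{w,v}=c_{v,w}^{t}\,c_{v,w}^{-1}$ for a suitable transvection $t$, whence $c_{w,v}^{m}=[t^{m},c_{v,w}^{-1}]$; combined with the observation that some power $t^m$ lies in $G$ and that $\Z$ is torsion-free, this kills $\phi(c_{w,v})$ for any $G$-invariant $\phi$. You correctly flag the need to pass to powers lying in the subgroup, but you do not produce the identity, and you should also account for the $\tau$-maps (Day's Theorem \ref{torelligen} says $\CI A_\Gamma$ is generated by partial conjugations \emph{and} $\tau$-maps), which are handled by writing $\tau=c^{-1}c^{t_{vw}}$ as in Lemma \ref{tau}. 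In summary: the architecture differs from the paper's (inflation--restriction versus exhibiting a controlled congruence subgroup inside $G$), the second half can be completed along the lines you sketch, but the first half as written rests on an unproved -- and nontrivial -- assertion.
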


 \begin{remark} Denote by $F_k$ the free group on $k$ letters. As we will see in Lemma \ref{propertyB} below, $\Gamma$ has property (B) if and only if   $A_\Gamma \cong F_{n_1} \times \ldots \times F_{n_k}\times\Z^a,$ where $n_1,\ldots,n_k> 2$ and $a\neq 2$. Observe, however, that if $w$ is a vertex corresponding to the abelian factor then for any other vertex $v$ there is a transvection $t_{vw}\in \Aut(A_\Gamma)$ mapping $v\mapsto vw$ and fixing the rest of the generators. In particular, $\Aut(A_\Gamma)$ does not keep the factors invariant in general; this serves to highlight the (well-known) fact that the automorphism group of such a direct product is a lot more complicated than the product of the automorphism groups of the factors. 
\label{rmk1}
\end{remark}

In view of the remark above, one sees that a number of particular cases of Theorem \ref{thma} were previously known. Indeed, if $\Aut(A_\Gamma) = \Aut(F_n)$ and $n\ge 3$, then our main result appears as Theorem 4.1 of  \cite{BV}. Moreover, Satoh \cite{satoh} has described the (finite)
abelianization of the {\em m-Torelli subgroup} $\CI A(m)<\Aut(F_n)$, also for $n \ge 3$; see section \ref{s:defs} for definitions. We stress, however, that deciding whether {\em every} finite-index subgroup of $\Aut(F_n)$ has finite abelianization -- or, more generally,  {\em Kazhdan's property (T)}, see below -- for $n\ge 4$ remains a challenging open problem. The answer is known to be negative for $n=3$ (see \cite{McCool,GL,BV}), and for $n=2$ since $\Aut(F_2)$ surjects onto a free group. 

On the other end of the spectrum, if $\Gamma$ is a complete graph on $n\ge 3$ vertices  -- in which case it also satisfies property (B) -- then every finite-index subgroup of $\Aut(A_\Gamma) = GL(n, \Z)$ has finite abelianization, for instance because $\GL(n,\Z)$ has {\em Kazhdan's property (T)}  \cite{Kahzdan}.



\subsection{Property (T)} In the opposite direction to Theorem \ref{thma}, we will show that the failure of property (B2) has strong consequences for the matrix group $\Aut(A_\Gamma) / \CI A_\Gamma< \GL(n,\Z)$, with $n$ the number of vertices of $\Gamma$, which will in particular imply that an explicit finite-index subgroup $\Aut(A_\Gamma)$ surjects onto $\Z$. 

Following the notation from \cite{Day} we denote by $\SAut^0(A_\Gamma)$ the finite-index subgroup of $\Aut(A_\Gamma)$  generated by {\em transvections} and {\em partial conjugations}; see section \ref{s:defs} for definitions. By Corollary 4.10 of \cite{Wade}, $\CI A_\Gamma < \SAut^0(A_\Gamma)$. We will prove:

\begin{theorem} Let $\Gamma$ be a simplicial graph. Then the following conditions are equivalent:

\begin{itemize}
\item[i)] $\Gamma$ has property (B2),

\item[ii)] $\Aut(A_\Gamma)/\CI A_\Gamma$  has Kazhdan's property (T),

\item[iii)] $\Aut(A_\Gamma)/\CI A_\Gamma$ is not virtually indicable,

\item[iv)] $H^1(\SAut^0(A_\Gamma)/\CI A_\Gamma,\Z) = 0.$
\end{itemize}
\label{prop1}
\end{theorem}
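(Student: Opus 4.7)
Set $\mathcal H := \Aut(A_\Gamma)/\CI A_\Gamma \le \GL(n,\Z)$ and $Q := \SAut^0(A_\Gamma)/\CI A_\Gamma$. Partial conjugations act trivially on $H_1(A_\Gamma)$, so $Q$ is generated inside $\mathcal H$ by the images of transvections, namely one elementary matrix $\tau_{v,w}$ for each ordered pair with $v \le w$ and $v \ne w$. Since $\SAut^0(A_\Gamma)$ has finite index in $\Aut(A_\Gamma)$ and contains $\CI A_\Gamma$, the group $Q$ has finite index in $\mathcal H$. My plan is to run the cycle (i)$\Rightarrow$(ii)$\Rightarrow$(iii)$\Rightarrow$(iv)$\Rightarrow$(i). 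The two middle implications are routine: (ii)$\Rightarrow$(iii) is the standard fact that Kazhdan's property (T) passes to finite-index subgroups and rules out infinite abelianization; (iii)$\Rightarrow$(iv) is immediate from $Q$ being of finite index in $\mathcal H$.

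For (iv)$\Rightarrow$(i), I argue by contrapositive. Assume (B2) fails at some pair $v \le w$ with $v \ne w$, so no $u \ne v,w$ satisfies $v \le u \le w$. The plan is to construct a nontrivial homomorphism $\phi\colon Q \to \Z$ by setting $\phi(\tau_{v,w}) = 1$ (and $\phi(\tau_{w,v}) = -1$ if $v \sim w$) while $\phi(\tau_{x,y}) = 0$ on all other generators. Well-definedness reduces to checking the Steinberg-type commutator relations among the elementary matrices in $\GL(n,\Z)$: the only relation that could force $\phi(\tau_{v,w}) = 0$ is of the form $\tau_{v,w} = [\tau_{u,w},\tau_{v,u}]$, which requires an intermediate vertex $u$ with $v \le u \le w$, precisely what the failure of (B2) forbids. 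Commutators of transvections with disjoint indices pose no obstruction, and the antisymmetric relations $[\tau_{x,y},\tau_{y,x}]$ are respected by the sign-symmetric definition of $\phi$; higher-order Steinberg relations can be verified by passing to the Steinberg presentation of $\GL(n,\Z)$ restricted to our generating set.

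The main obstacle is (i)$\Rightarrow$(ii). Assuming (B2), every generator $\tau_{v,w}$ of $Q$ factors as a commutator $[\tau_{u,w},\tau_{v,u}]$ of other generators, so $Q$ is perfect; this alone yields (iv) but is far from (T), which is strictly stronger than trivial abelianization. To upgrade, I would invoke a modern criterion for property (T) of matrix groups generated by elementary generators with enough commutator factorizations---either an Ershov--Jaikin-type result on nilpotent generating pairs or a Kassabov--Nikolov-style bounded elementary generation argument. An alternative route exploits the partition of $V(\Gamma)$ by the equivalence relation $\sim$: under (B2) every non-singleton class must have at least three elements, producing $\GL(k,\Z)$-blocks $(k \ge 3)$ with classical property (T), while (B2) applied to strict pairs controls the off-diagonal unipotent piece; the remaining crux is then a suitable (T)-inheritance lemma for the resulting semidirect-product structure of $\mathcal H$.
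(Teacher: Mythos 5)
Your overall architecture coincides with the paper's: the same cycle (i)$\Rightarrow$(ii)$\Rightarrow$(iii)$\Rightarrow$(iv)$\Rightarrow$(i), the same two easy middle implications, the same exponent-sum homomorphism for (iv)$\Rightarrow$(i), and the same block-triangular picture of $\mathcal H$ for (i)$\Rightarrow$(ii). However, both of the hard implications are left with genuine gaps. For (iv)$\Rightarrow$(i), the well-definedness of $\phi$ cannot be settled by ``passing to the Steinberg presentation of $\GL(n,\Z)$ restricted to our generating set'': $Q$ is a concrete subgroup of $\GL(n,\Z)$ generated by a particular set of elementary matrices, and there is no a priori reason its relations are consequences of the Steinberg commutator relations among those generators. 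The paper closes exactly this hole by invoking Wade's explicit presentation of $\mathcal H$ (Proposition 4.11 of \cite{Wade}), in which the distinguished transvection $T_{vw}$ occurs with exponent sum zero in every relator. Your fallback for the case $v\sim w$ (setting $\phi(\tau_{w,v})=-1$) cannot be repaired this way: for $\Gamma$ two isolated vertices the group $Q$ is $\SL(2,\Z)$, whose abelianization is finite, so no assignment of integers to $\tau_{v,w},\tau_{w,v}$ yields a nontrivial map to $\Z$ --- concrete evidence that the relations of the actual matrix group go beyond Steinberg relations. (The paper only treats the situation $[v]=\{v\}$, $[w]=\{w\}$, i.e.\ an inequivalent witnessing pair.)

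The larger gap is (i)$\Rightarrow$(ii), which is the bulk of the paper and which you reduce to ``invoke a modern criterion'' or to an unproved ``(T)-inheritance lemma for the resulting semidirect-product structure.'' That lemma is precisely the content of the paper's Proposition \ref{propertyT}: one forms the directed graph $\Lambda$ on the $\sim$-classes, proves $\mathcal H_\Lambda$ is perfect by the commutator factorization you describe, and then inducts on the number of classes by deleting an initial or terminal class. The base case $\SL_{n_k}(\Z)\ltimes \mathrm{M}_{n_k\times m}(\Z)$ has (T) by Cornulier; when an extremal class has size at least $3$ one quotients by the corresponding normal subgroup $N_i$ and applies the extension criterion of \cite{BHV}; and in the delicate case where both extremal classes are singletons one must establish relative property (T) for the pair $\bigl(\mathcal H_\Lambda/C,\,N_1/C\oplus N_2/C\bigr)$ via Fern\'os and then lift through the central subgroup $C$ using perfectness. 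None of this is routine, and merely naming Ershov--Jaikin or Kassabov--Nikolov does not supply it. You also use, correctly but without proof, that (B2) forbids $\sim$-classes of size exactly $2$; this needs the short argument from Lemma \ref{propertyB}.
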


Note that, in particular, Theorem \ref{prop1} gives a characterization of property (T) for matrix subgroups $\Aut(A_\Gamma)/\CI A_\Gamma < \GL(n,\Z)$ in terms of their virtual indicability. Observe also that, since property (T) is inherited by quotients, Theorem \ref{prop1} immediately yields: 

\begin{corollary}
Let $\Gamma$ be a simplicial graph  which does not satisfy property (B2). Then $\SAut^0(A_\Gamma)$ surjects onto $\Z$; consequently, $\SAut^0(A_\Gamma)$ does not have Kazhdan's property (T).
\label{thm-indic}
\end{corollary}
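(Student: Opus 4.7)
The plan is to read this off directly from Theorem \ref{prop1} together with a standard fact about property (T). Since $\Gamma$ fails condition (i), the equivalence in Theorem \ref{prop1} gives that condition (iv) fails as well, i.e.
\[
H^1\bigl(\SAut^0(A_\Gamma)/\CI A_\Gamma,\Z\bigr) \ne 0.
\]
Since $H^1(-,\Z) = \Hom(-,\Z)$, there is a non-trivial homomorphism $\varphi\colon \SAut^0(A_\Gamma)/\CI A_\Gamma \to \Z$. Its image is a non-zero subgroup of $\Z$, hence infinite cyclic, so after post-composing with an isomorphism onto $\Z$ we may take $\varphi$ to be surjective. Pre-composing with the quotient map $\SAut^0(A_\Gamma) \twoheadrightarrow \SAut^0(A_\Gamma)/\CI A_\Gamma$ produces the desired surjection $\SAut^0(A_\Gamma) \twoheadrightarrow \Z$.

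For the second assertion, I would invoke the standard fact that property (T) is preserved under quotients: if $G$ has property (T) and $G \twoheadrightarrow Q$, then $Q$ has property (T) as well. Equivalently, a group with property (T) has finite abelianization and therefore cannot surject onto $\Z$. Applying this to the surjection $\SAut^0(A_\Gamma) \twoheadrightarrow \Z$ just constructed, and using that $\Z$ does not have property (T), immediately rules out property (T) for $\SAut^0(A_\Gamma)$.

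There is no real obstacle here; all the work has been absorbed into Theorem \ref{prop1}. The only thing to double-check is that the quotient $\SAut^0(A_\Gamma)/\CI A_\Gamma$ is indeed a quotient of $\SAut^0(A_\Gamma)$, which is immediate from $\CI A_\Gamma < \SAut^0(A_\Gamma)$ (recorded just before the statement of Theorem \ref{prop1} via the cited Corollary 4.10 of \cite{Wade}).
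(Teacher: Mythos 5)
Your proposal is correct and matches the paper's own (very brief) justification: the paper likewise derives the surjection onto $\Z$ from the failure of condition (iv) in Theorem \ref{prop1} and then rules out property (T) because it passes to quotients and $\Z$ does not have it. Nothing further is needed.
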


In the light of this corollary, a natural problem is: 

\begin{problem}
Suppose that the graph $\Gamma$ satisfies (B2) but not (B1). Does $\Aut(A_\Gamma)$ have Kazhdan's property (T)?   Is $\Aut(A_\Gamma)$ virtually indicable? 
\label{qn}
\end{problem}

We note that an affirmative answer to the second question in Problem \ref{qn}  implies a negative answer to the first.




\subsection{Virtual indicability}
As it turns out, the answer to the second  question in Problem \ref{qn} is affirmative for certain classes of graphs. This will come as a consequence of our next result, which asserts that the existence of a vertex $w\in V(\Gamma)$ that is  {\em minimal} with respect to the partial order $\le$, and such that $\Gamma - \st(w)$ is disconnected, also implies the virtual indicability of $\Aut(A_\Gamma)$. More concretely, let $\Aut^1(A_\Gamma)$ be the finite-index subgroup of $\Aut(A_\Gamma)$  generated by {\em transvections}, {\em partial conjugations}, and the so called {\em thin inversions}; see again section \ref{s:defs} for definitions. We will show:

\begin{theorem}
Let $\Gamma$ be a simplicial graph. Suppose there exists a vertex $w\in V(\Gamma)$ such that
there is no $v \in V(\Gamma)$ with $v\le w$. Then $\Aut^1(A_\Gamma)$  surjects onto $\Z$. If moreover
$\Gamma - \st(w)$ is not connected, then also $\Out^1(A_\Gamma)$ surjects onto $\Z$.

\label{thm-indic2}
\end{theorem}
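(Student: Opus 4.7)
By condition (2), write $\Gamma-\st(w)=C_1\sqcup\cdots\sqcup C_k$ with $k\geq 2$, so that
\[
A_\Gamma \;=\; A_{\st(w)\cup C_1}*_{A_{\st(w)}}\cdots*_{A_{\st(w)}}A_{\st(w)\cup C_k}.
\]
The partial conjugations $c_{w,C_i}\in\Aut^0(A_\Gamma)$ are Dehn twists along the edges of this splitting by the central element $w\in A_{\st(w)}$, and they satisfy the fundamental identity $\prod_{i=1}^k c_{w,C_i}=\mathrm{inn}_w$. From condition (1) I would first extract two useful facts: each $|C_i|\geq 2$ (otherwise a singleton $\{v\}=C_i$ gives $\lk(v)\subset\st(w)$, forcing $v\leq w$), and the standard generating set of $\Aut^0(A_\Gamma)$ contains no transvection $R_{vw}$ or $L_{vw}$ with $v\neq w$, since any $v\leq w$ must equal $w$.

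\textbf{Defining and verifying $\phi$.} The plan is to define $\phi\colon\Aut^0(A_\Gamma)\to\Z$ on the standard generators by
\[
\phi(c_{w,C_1})=1,\qquad \phi(c_{w,C_j})=0\text{ for }2\leq j\leq k,
\]
and $\phi=0$ on every other generator (the remaining transvections, inversions, and partial conjugations $c_{v,C}$ with $v\neq w$), and then to check that $\phi$ respects each relation of a presentation of $\Aut^0(A_\Gamma)$ such as Day's \cite{Day}. The assignment is internally consistent with inner automorphisms: $\phi(\mathrm{inn}_w)=\sum_i\phi(c_{w,C_i})=1$ and $\phi(\mathrm{inn}_v)=\sum_C\phi(c_{v,C})=0$ for every $v\neq w$. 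Relations not involving any $c_{w,C_i}$ are trivially respected, and the $c_{w,C_i}$ pairwise commute (disjoint supports, common conjugator). The delicate cases are commutators $[\alpha,c_{w,C_i}]$ with $\alpha$ a transvection or a partial conjugation not centered at $w$; an explicit computation (for instance, for $\alpha=R_{wv}$ with $w\leq v\in C_j$, the commutator $[R_{wv},c_{w,C_j}]$ acts by conjugating $\{w\}\cup C_j$ by $wvw^{-1}$ and fixing everything else) shows that each such commutator decomposes as a product of partial conjugations $c_{u,D}$ and inner automorphisms $\mathrm{inn}_u$ with $u\neq w$, all of $\phi$-value zero.

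\textbf{Main obstacle and conclusion.} The genuine difficulty is a systematic case analysis of these mixed commutator relations, keeping track of the decomposition of each commutator into standard generators. Condition (1) plays the decisive role here: by forbidding transvections $R_{vw}$, $L_{vw}$ for $v\neq w$, it removes precisely the relations that would couple $c_{w,C_1}$ to $\mathrm{inn}_w$ on one side and to a $\phi$-zero product on the other, which would force $\phi(c_{w,C_1})=0$ and collapse the construction. Once $\phi$ is verified to be a well-defined homomorphism, the theorem follows immediately, since $\phi(c_{w,C_1})=1$ shows that $\phi$ surjects onto $\Z$.
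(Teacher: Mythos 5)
Your overall strategy---an explicit character on a presentation of $\Aut^0(A_\Gamma)$ supported on the partial conjugations at $w$, with condition (1) used to rule out the relations that would kill it---is the paper's strategy, but the one design choice where you diverge is exactly the one that carries the content of the proof. The paper's homomorphism is the \emph{difference} $\pi=\pi_Y-\pi_Z$ of two ``component-counting'' characters attached to two distinct components $Y\neq Z$ of $\Gamma-\st(w)$, where $\pi_Y$ is defined on \emph{all} type (2) Whitehead generators by the rule $\pi_Y(A,w)=1$ iff $Y\cup Y^{-1}\subseteq A$ (and $-1$ when the multiplier is $w^{-1}$). Two things follow. First, this rule is the extension of your $\phi$ to the generating set of the only presentation available (Day's, or Proposition 4.1 of the paper for $\Aut^0$); you never supply such an extension, yet relations like (R2), $(A,w)(B,w)=(A\cup B,w)$, already mix Laurence--Servatius generators with general Whitehead automorphisms, so the verification cannot even be stated without it. Second, taking the difference forces $\pi(\mathrm{inn}_w)=1-1=0$: this is precisely where hypothesis (2) is used (you need two components to build a nonzero character vanishing on $\mathrm{inn}_w=\prod_i c_{w,C_i}$), it makes Day's relation (R10)---the rule $(A,v)\,\mathrm{inn}_u\,(A,v)^{-1}=\mathrm{inn}_v\,\mathrm{inn}_u$---hold with no case analysis, and it yields the corollary for $\Out^0(A_\Gamma)$ for free. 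Your single-component $\phi$ has $\phi(\mathrm{inn}_w)=1$ and makes no essential use of disconnectedness, which is already a warning sign.

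Beyond that, the step you yourself flag as ``the genuine difficulty'' is the entire proof and is not carried out: Day's relations (R4), (R5), (R6), (R10) are not commutators, so your commutator-based sketch does not cover them. One of them is fatal to the assignment as written: the inversion $\iota_w$ lies in $\Aut^0(A_\Gamma)$ by definition and conjugates $c_{w,C_1}$ to $c_{w,C_1}^{-1}$ (an instance of (R6), or just the identity $\alpha\,\mathrm{inn}\text{-type}\,\alpha^{-1}$ computed directly), so any homomorphism $\phi\colon\Aut^0(A_\Gamma)\to\Z$ must satisfy $\phi(c_{w,C_1})=-\phi(c_{w,C_1})=0$, contradicting $\phi(c_{w,C_1})=1$. (Note that this relation also constrains the paper's $\pi$, whose verification passes over (R6) in silence; the construction is genuinely safe on $\SAut^0(A_\Gamma)$, where inversions are absent, and this is a point your write-up should confront explicitly rather than fold into ``$\phi=0$ on inversions''.) To repair the proposal along the paper's lines: extend $\phi$ to all Whitehead generators by the subset-containment rule, replace it by a difference of two component characters so that it kills $\mathrm{inn}_w$, and check each of (R1)--(R10) individually, using condition (1) (via Day's Lemma 2.5: a letter moved one-sidedly by $(A,v)$ must satisfy $u\le v$) to show that the problematic instances of (R4) and (R5) do not occur.
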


Here, $\Out^1(A_\Gamma)$ is the quotient of $\Aut^1(A_\Gamma)$ by the normal subgroup of inner automorphisms which has finite index in the  group $\Out(A_\Gamma)$ of {\em outer} automorphisms of $A_\Gamma$. In a previous version of this paper (which includes the published version), a result implying Theorem \ref{thm-indic2} was stated with the group $\Aut^0(A_\Gamma)$ generated by    transvections, partial conjugations, and {\em all}  inversions instead of $\Aut^1(A_\Gamma)$. Unfortunately, there was a gap in the proof; nevertheless, the same argument works after passing down to the group $\Aut^1(A_\Gamma)$; see Section 5. 

As an immediate consequence, we obtain: 

\begin{corollary} 
Let $\Gamma$ be a simplicial graph as in  Theorem \ref{thm-indic2}. Then $\Aut(A_\Gamma)$ and  $\Out(A_\Gamma)$ do not have Kazhdan's property (T). 
\label{corT2}
\end{corollary}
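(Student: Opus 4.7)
The plan is to deduce the corollary directly from Theorem \ref{thm-indic2} (for $\Aut(A_\Gamma)$) and the remark following it (for $\Out(A_\Gamma)$), together with the two standard inheritance properties of Kazhdan's property (T): it passes to finite-index subgroups, and it passes to quotients.

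First I would argue the statement for $\Aut(A_\Gamma)$. By Theorem \ref{thm-indic2}, the finite-index subgroup $\Aut^0(A_\Gamma) < \Aut(A_\Gamma)$ admits a surjective homomorphism onto $\Z$. Since $\Z$ is infinite and abelian, it is a discrete group with infinite abelianization, and in particular it fails Kazhdan's property (T); indeed, any countable discrete group with property (T) has finite abelianization. As property (T) is inherited by quotients, this forces $\Aut^0(A_\Gamma)$ not to have property (T). Finally, since property (T) is also inherited by passing to finite-index subgroups, we conclude that $\Aut(A_\Gamma)$ does not have property (T).

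For the statement concerning $\Out(A_\Gamma)$, I would invoke Remark \ref{rmkout}, which asserts that the proof of Theorem \ref{thm-indic2} also produces a finite-index subgroup of $\Out(A_\Gamma)$ surjecting onto $\Z$ (this is the content of the forthcoming Corollary \ref{cor:out}). Then exactly the same two-step argument as above applies: the existence of such a surjection prevents the finite-index subgroup from having property (T), and hence prevents $\Out(A_\Gamma)$ itself from having it.

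There is no real obstacle here; the proof is essentially a one-line deduction from Theorem \ref{thm-indic2} and Remark \ref{rmkout}, the only subtlety being to cite the correct permanence properties of property (T). I would therefore write the proof in two short sentences, once the two inheritance facts have been recalled.
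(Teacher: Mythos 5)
Your proposal is correct and matches the paper's (implicit) argument: the paper presents the corollary as an immediate consequence of Theorem \ref{thm-indic2} and Corollary \ref{cor:out}, relying on exactly the permanence properties you cite — that property (T) passes to finite-index subgroups and to quotients, so a group with a finite-index subgroup surjecting onto $\Z$ cannot have (T).
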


As mentioned above, Theorem \ref{thm-indic2} provides examples of graphs for which Question \ref{qn} has a positive answer:

\begin{example}
Consider the graph $\Gamma$ which is the disjoint union of a single vertex $\{w\}$, and a complete graph $\Gamma_1$ of at least  three vertices. Then $\Gamma$ satisfies (B2), but not (B1) since $w$ is not equivalent to any other vertex. As $w$ is a minimal element for $\le$, it follows from Theorem \ref{thm-indic2} that $\Aut^1(A_\Gamma)$ surjects onto $\Z$.
If we add a second complete graph  $\Gamma_2$ having at least three vertices too, then Theorem \ref{thm-indic2} implies that also $\Out^1(A_\Gamma)$ surjects onto $\Z$.
\end{example}

Finally we remark that, in the particular case when $\Gamma$ is a tree, it is possible to give a explicit characterization of the hypotheses of Theorem \ref{thm-indic2}; see Proposition \ref{tree-charac} below. 

\medskip

\subsubsection{Further applications} Theorems \ref{prop1} and \ref{thm-indic2} have a number of other immediate consequences which highlight that the behaviour of $\Aut(A_\Gamma)$ for an arbitrary $\Gamma$ can be quite different to that of the two extremal cases of $\Aut(F_n)$ and $\GL(n,\Z)$, as we now discuss. For the sake of concreteness, $\Gamma$ denotes a finite simplicial graph that does not satisfy property (B2); as we will see in the proof of Theorem \ref{prop1}, this yields the existence of a transvection that maps non-trivially under the homomorphism  $\SAut^0(A_\Gamma) \to \Z.$ 

\smallskip

\noindent{\em A. Actions of $\Aut(A_\Gamma)$ on non-positively curved spaces.} Bridson has proved that, whenever $n\ge 4$, if any finite index subgroup of $G<\Aut(F_n)$ acts on a complete non-positively curved  space (i.e.  a complete ${\rm CAT}(0)$ space) by (semisimple) isometries then every power of a transvection that lies in $G$ fixes a point; see Theorem 1.1 of \cite{Bridson}.
However, in the light of Theorem \ref{prop1} the analogous statement does not hold for arbitrary  $\Aut(A_\Gamma)$. Indeed, using the existence of a transvection $t\in \SAut^0(A_\Gamma)$ that maps nontrivially to $\Z$, we can construct (say) a non-trivial homomorphism $$\SAut^0(A_\Gamma) \to {\rm Isom}^+(\mathbb H^2)=\SL(2,\R)$$ such that the image of $t$ is a hyperbolic isometry. 

\smallskip

\noindent{\em B. Homomorphisms from $\Aut(A_\Gamma)$ into mapping class groups.} Recall that the mapping class group $\Mod(S)$ of a topological surface $S$ is the group of homeomorphisms of $S$, modulo isotopy. As a consequence of the above result of Bridson, one deduces that any homomorphism from (a finite index subgroup of) $\Aut( F_n) $ to $\Mod(S)$ must send (powers of) transvections to roots of Dehn multitwists; see Corollary 1.2 of \cite{Bridson}. The analogous statement for $\SL(n,\Z)$ is due to Farb-Masur \cite{FM}.  

Again using the existence of a transvection $t\in \SAut^0(A_\Gamma)$ that maps nontrivially to $\Z$, we deduce that there exist homomorphisms $\SAut^0(A_\Gamma) \to \Mod(S)$ for which the image of $t$ is a pseudo-Anosov (in particular, not a root of a multitwist). 

\smallskip

\noindent{\em C. Representations of $\Aut(A_\Gamma)$ into $\SL(m,\R)$.} Again in the same paper (see Corollary 8.3 of \cite{Bridson}), Bridson deduces that the image of a (power of a) transvection under a representation from (a finite index subgroup of) $\Aut(F_n)$ into $\SL(m, \R)$, where $n\ge 6$ and $m$ is arbitrary, is {\em unipotent}: all its eigenvalues are roots of unity. For $\SL(n,Z)$, the analogous statement follows from Margulis' {\em Superrigidity Theorem} \cite{Margulis}. However, using the same argument as in (A), we see that this is not true for arbitrary $\Aut(A_\Gamma)$.

\subsection{Linearity problem for $\Aut(A_\Gamma)$}
In a different direction, during our work we noticed that if the graph $\Gamma$ satisfies a certain (drastic) weakening of property (B1) above then $\Aut(A_\Gamma)$ is not linear; this refines a question of Charney (see Problem 14 of \cite{Charney}).  More concretely, we say that a simplicial graph $\Gamma$ satisfies property (NL) if there exist pairwise non-adjacent vertices $v_1, v_2, v_3 \in V(\Gamma)$ such that $v_3 \le v_i$, for $i=1,2$.  Using an argument of Formanek-Procesi \cite{FP}, we will observe:

\begin{proposition}
Let  $\Gamma$ be a simplicial graph with  property (NL). Then $\Aut(A_\Gamma)$  is not linear. 
\label{nonlinear}
\end{proposition}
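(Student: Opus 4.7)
The plan is to adapt the argument of Formanek and Procesi \cite{FP}, who established non-linearity of $\Aut(F_n)$ for $n\ge 3$ by exhibiting a specific finitely generated subgroup of $\Aut(F_3)$ that admits no faithful linear representation. Their ``poison'' subgroup is generated by two commuting transvections of the form $z\mapsto z x_i$ (for $i=1,2$) together with the inner automorphisms by $x_1$ and $x_2$, where $F_3=\langle x_1,x_2,z\rangle$. My aim is to realize an isomorphic copy of this subgroup inside $\Aut(A_\Gamma)$; once that is done, non-linearity of $\Aut(A_\Gamma)$ follows since every subgroup of a linear group is linear.

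First I would unpack the hypothesis. Fix pairwise non-adjacent vertices $v_1,v_2,v_3$ with $v_3\le v_1,v_2$, as provided by property (NL). Since $\{v_1,v_2,v_3\}$ spans no edges of $\Gamma$, the subgroup $F=\langle v_1,v_2,v_3\rangle\le A_\Gamma$ is free of rank three. The inequalities $v_3\le v_i$, combined with $v_3\not\sim v_i$, give $\lk(v_3)\subseteq \lk(v_i)$, so the transvections
\[
\alpha_i\colon v_3\mapsto v_3 v_i,\quad v\mapsto v\text{ for }v\neq v_3\quad (i=1,2)
\]
are well defined elements of $\Aut(A_\Gamma)$. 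Let $\iota_1,\iota_2\in\Aut(A_\Gamma)$ denote the inner automorphisms by $v_1$ and $v_2$, and set $G=\langle \alpha_1,\alpha_2,\iota_1,\iota_2\rangle\le\Aut(A_\Gamma)$. Restriction to $F$ defines a homomorphism $\rho\colon G\to \Aut(F)$ whose image is precisely the Formanek-Procesi poison subgroup of $\Aut(F_3)$.

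The heart of the argument is the injectivity of $\rho$. For $g\in G$ written as a word in $\alpha_i^{\pm 1},\iota_j^{\pm 1}$, the transvections $\alpha_i^{\pm 1}$ act trivially on every vertex except $v_3$, while the inner automorphisms $\iota_j^{\pm 1}$ act by conjugation by $v_j^{\pm 1}$. Consequently, for any $v\in V(\Gamma)\setminus\{v_3\}$ one computes $g(v)=w v w^{-1}$, where $w\in\langle v_1,v_2\rangle$ is obtained from the expression for $g$ by deleting the $\alpha$-letters and recording each $\iota_j^{\epsilon}$ as $v_j^{\epsilon}$. If $g\in\ker\rho$, then in particular $g(v_1)=v_1$ and $g(v_2)=v_2$, so $w$ commutes with both $v_1$ and $v_2$ in $A_\Gamma$. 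Using the standard description of centralizers of vertex generators in $A_\Gamma$ together with $v_1\not\sim v_2$, the intersection of these centralizers meets $\langle v_1,v_2\rangle\cong F_2$ only in the identity, so $w=1$. The condition $\rho(g)=1$ also forces $g(v_3)=v_3$ inside $F\le A_\Gamma$; combined with the above, $g$ fixes every vertex of $\Gamma$, so $g=1$.

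Once injectivity is established, $G$ is isomorphic to the Formanek-Procesi poison subgroup, which is not linear by \cite{FP}; therefore $\Aut(A_\Gamma)$ is not linear. The principal obstacle is this injectivity analysis: one must verify that the additional commutation relations present in $A_\Gamma$ (and absent in $F_3$) do not cause the restriction map to collapse, and this is precisely where the pairwise non-adjacency of $v_1,v_2,v_3$ and the centralizer structure of $A_\Gamma$ intervene decisively.
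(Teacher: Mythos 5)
The central step of your argument fails: the group $G=\langle\alpha_1,\alpha_2,\iota_1,\iota_2\rangle$ you construct is itself linear, so exhibiting it inside $\Aut(A_\Gamma)$ proves nothing. Indeed, for any $\beta\in\Aut(A_\Gamma)$ and $h\in A_\Gamma$ one has $\beta\,\iota_h\,\beta^{-1}=\iota_{\beta(h)}$, where $\iota_h$ denotes conjugation by $h$. Since $\alpha_i$ fixes $v_1$ and $v_2$, this gives $[\alpha_i,\iota_j]=1$ for all $i,j\in\{1,2\}$. Hence $G$ is generated by two commuting subgroups, $\langle\alpha_1,\alpha_2\rangle\cong F_2$ and $\langle\iota_1,\iota_2\rangle\cong F_2$, and your own centralizer computation shows these intersect trivially; therefore $G\cong F_2\times F_2$, which embeds in $\GL(4,\Z)$. (For the same reason, the subgroup of $\Aut(F_3)$ you call ``the Formanek--Procesi poison subgroup'' is not their non-linear witness, and your two transvections do not commute with each other, only with the $\iota_j$.) The actual Formanek--Procesi obstruction is a copy of the group $H(F_2)$, built from $F_2\times F_2$ together with a stable letter realized by the inner automorphism $\iota_3$ by the \emph{transvected} generator $v_3$; the entire source of non-linearity is the relation $\alpha_i\,\iota_3\,\alpha_i^{-1}=\iota_{\alpha_i(v_3)}=\iota_{v_3v_i}$, which is precisely what your generating set omits. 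Adding $\iota_3$ would also invalidate your injectivity computation as written, since $\alpha_i$ no longer acts trivially on the conjugating word once $v_3$ may occur in it.

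For comparison, the paper's proof does not attempt to reconstruct the non-linear subgroup at all. It verifies that $\langle v_1,v_2,v_3\rangle\cong F_3$ and that the transvections $\alpha_i=t_{v_3v_i}$ exist, so that $\langle\alpha_1,\alpha_2\rangle$ is a \emph{poison subgroup} in the sense of Brendle and Hamidi-Tehrani --- note that in that terminology the poison subgroup is just $\langle\alpha_1,\alpha_2\rangle\cong F_2$, which is of course linear --- and then invokes the packaged Formanek--Procesi theorem that the presence of a poison subgroup in $\Aut(H)$ forces $\Aut(H)$ to be non-linear. The inner automorphism by $v_3$ enters in the proof of that black box, which is exactly the ingredient your unpacking loses.
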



The plan of the paper is as follows. In section \ref{s:defs} we will give all the necessary definitions and results that will be used throughout the paper. 
Sections 2, 3, and 4 are devoted to the proofs of Theorem \ref{thma}, \ref{prop1}, and \ref{thm-indic2}, respectively. Finally, in the appendix we will discuss the linearity problem for automorphism groups of right-angled Artin groups.

\medskip

\noindent {\bf Acknowledgements.} The first author was supported by a 2014 Campus Iberus grant, and thanks the Universidad de Zaragoza for its hospitality. The second author was partially supported by  Gobierno de Arag\'on, European Regional 
Development Funds and MTM2010-19938-C03-03.   The authors would like to thank Matt Day, Andrei Jaikin, Juan Souto and Ric Wade for conversations. 
We are grateful to Dawid Kielak for suggesting us to include the application to Property (T), and to Neil Fullarton for pointing out a mistake in an earlier version of this paper. Finally, we thank the referee for useful comments and suggestions. 




\section{Definitions}
\label{s:defs}

\subsection{Graphs} Let $\Gamma$ be a  simplicial graph, and  denote its vertex set by $V(\Gamma)$. Given $v \in V(\Gamma)$, the {\em link} $\lk(v)$ of $v$ is the full subgraph spanned by those vertices of $\Gamma$ that are adjacent to $v$. The {\em star} $\st(v)$ of $v$ is defined as the subgraph spanned by the vertices in $\lk(v) \cup \{v\}$. 
As mentioned in the introduction, there is a natural partial ordering on $V(\Gamma)$ given by $$v\le w \iff \lk(v) \subset \st(w),$$
for any two vertices $v,w\in V(\Gamma)$. We will write $v\sim w$ to mean $v\le w$ and $w\le v$ and $[v]$ for the equivalence class of $v$ with respect to the relation $\sim$.

\subsection{Right-angled Artin groups} The {\em right-angled Artin group} defined by $\Gamma$ is the group given by the presentation
$$A_\Gamma= \langle v\in V(\Gamma) \mid [u,v]=1 \iff  u {\text{ and }} v {\text{ are connected by an edge}} \rangle.$$
Observe that if $\Gamma$ consists of $n$ isolated vertices, then $A_\Gamma = F_n$, the free group on $n$ letters. On the other end of the spectrum, if $\Gamma$ is a complete graph on $n$ vertices then $A_\Gamma= \Z^n$. 

\subsection{Automorphisms of right-angled Artin groups} Let $A_\Gamma$ be the right-angled Artin group defined by the finite simplicial graph $\Gamma$, and $\Aut(A_\Gamma)$ its automorphism group.   Laurence \cite{Laurence} and Servatius \cite{Servatius} proved that $\Aut(A_\Gamma)$ is generated by the following types of automorphisms: 

\medskip

\begin{itemize}

\item  {\em Graphic automorphisms:} Every isomorphism $\Gamma \to \Gamma$ gives rise to an automorphism of $A_\Gamma$, called a {\em graphic automorphism}. 

\medskip

\item {\em Inversions:} Given $v\in V(\Gamma)$, the {\em inversion} $\iota_v$ is the automorphism of $A_\Gamma$ defined by $$\Bigg\{\begin{aligned}
\iota_v(v)&=v^{-1}\\
\iota_v(z)&=z,\, z\neq v.\\
\end{aligned}$$

\medskip

\item {\em Transvections:} Let $v,w \in V(\Gamma)$ with $v\le w$. The {\em transvection} $t_{vw}$ is the automorphism of $A_\Gamma$ defined by $$\Bigg\{\begin{aligned}
t(v)&=vw\\
t(z)&=z,\, z\neq v.\\
\end{aligned}$$

\medskip

\item {\em Partial conjugations:} Let $v\in V(\Gamma)$ and $Y$ a connected component of $\Gamma - \st(v)$. The {\em partial conjugation} $c_{v,Y}$ is the automorphism of $A_\Gamma$  defined by $$\Bigg\{\begin{aligned}
c_{v,Y}(w)&=v^{-1}wv, w\in Y\\
c_{v,Y}(z)&=z,\, z\notin Y.\\
\end{aligned}$$

\end{itemize}

A {\sl thin} vertex $v\in V(\Gamma)$ is a vertex such that its equivalence class $[v]$ has only one element. An inversion $\iota$ is {\sl thin} if it fixes every thin vertex.

\subsection{A finite presentation of $\Aut(A_\Gamma)$} A central ingredient of the proof of Theorem \ref{thm-indic2} will be the finite presentation of $\Aut(A_\Gamma)$ computed by Day \cite{Day},  which we now describe.

Let $\Gamma$ be a simplicial graph. In order to relax notation, we will blur the difference between vertices of $\Gamma$ and generators of the right-angled Artin group $A_\Gamma$. Write $L = V(\Gamma) \cup V(\Gamma)^{-1} \subset A_\Gamma$. 

A {\em type (1) Whitehead automorphism} is an element $\alpha \in {\rm Sym}(L) \subset \Aut(A_\Gamma)$, where ${\rm Sym}(L)$ denotes the group of permutations of $L$. A {\em type (2) Whitehead automorphism} is specified by a subset $A\subset L$ and an element $v\in L$ with $v \in A$ but $v^{-1} \notin A$. Given these, we set $(A,v)(v) = v$ and, for $w \ne v$,

$$(A,v)(w)= \left \{ \begin{aligned}
w, & \text{ if } w \notin A \text{ and } w^{-1} \notin A \\
wv, & \text{ if } w \in A \text{ and } w^{-1} \notin A \\
v^{-1}w, & \text{ if } w \notin A \text{ and } w^{-1} \in A \\
v^{-1}wv, & \text{ if } w \in A \text{ and } w^{-1} \in A \\
\end{aligned}
\right.$$

Observe that the Laurence-Servatius generators described in the previous subsection are Whitehead automorphisms.
Indeed, if $v\in V(\Gamma)$ and $Y$ is a connected component of $\Gamma-\st(v)$, then the partial conjugation that conjugates all the elements in $Y$ by $v$ is
$$c_{v,Y} = (Y\cup Y^{-1}\cup\{v\},v).$$ Similarly, if 
 $v	\le w$ the transvection that maps $v\mapsto vw$ is
$$t_{vw} = (\{v,w\},w).$$

In \cite{Day}, Day proved:

\begin{theorem}[\cite{Day}]
$\Aut(A_\Gamma)$ is the group generated by the set of all Whitehead automorphisms, subject to the following relations: 

\begin{enumerate}

\item[(R1)] $(A,v)^{-1} = (A - v \cup v^{-1}, v^{-1})$,

\item[(R2)] $(A,v)(B,v) = (A \cup B, v)$ whenever $A \cap B = \{v\}$,

\item[(R3)] $(B, w)(A,v)(B,w)^{-1}  = (A,v)$, whenever $\{v,v^{-1}\} \cap B = \emptyset$, $\{w, w^{-1} \} \cap A = \emptyset$, and at least one of $A \cap B = \emptyset$ or $ w \in \lk(v)$ holds,

\item[(R4)] $(B, w)(A,v)(B,w)^{-1} =  (A,v)(B - w \cup v, v)$, whenever $\{v,v^{-1}\} \cap B = \emptyset$, $w \notin A$, $w^{-1} \in A$,  and at least one of $A \cap B = \emptyset$ or $ w \in \lk(v)$ holds,

\item[(R5)] $(A - v \cup v^{-1}, w)(A, v) = (A - w \cup w^{-1}, v) \sigma_{v,w}$, where $w \in A$,
$w^{-1} \notin A$, $w \ne v$ but $w \sim v$, and where $\sigma_{v,w}$ is the type (1) automorphism such that $\sigma_{v,w}(v) = w^{-1}$, $\sigma_{v,w}(w) = v$, fixing the rest of generators.

\item[(R6)] $\sigma (A,v) \sigma^{-1}= (\sigma(A), \sigma(v))$, for every $\sigma$ of type (1).
\item[(R7)] All the relations among type (1) Whitehead automorphisms.

\item[(R9)] $(A,v)(L - w^{-1}, w) (A,v)^{-1} = (L - w^{-1}, w)$, whenever $\{w,w^{-1}\} \cap A = \emptyset$, and

\item[(R10)]$(A,v)(L - w^{-1}, w) (A,v)^{-1}= (L - v^{-1}, v)( L - w^{-1}, w)$, whenever $w \in A$ and $w^{-1} \notin A$. 
\end{enumerate}
\label{thm-day}
\end{theorem}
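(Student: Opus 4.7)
The plan is to follow the peak-reduction strategy, which generalises Whitehead's algorithm from $\Aut(F_n)$ to $\Aut(A_\Gamma)$. The argument splits into two parts: first, that the Whitehead automorphisms generate $\Aut(A_\Gamma)$; and second, that the relations (R1)--(R10) suffice.

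The generation statement is immediate from the Laurence--Servatius theorem recalled above: each transvection, partial conjugation, inversion, and graphic automorphism is visibly a Whitehead automorphism of type (1) or (2), with the explicit expressions in terms of subsets $A \subset L$ already spelled out in the preceding subsection. Checking that each of (R1)--(R10) actually holds in $\Aut(A_\Gamma)$ is a direct, if somewhat tedious, case-by-case verification: one evaluates both sides on every generator $v \in V(\Gamma)$ using the piecewise definition of $(A,v)$ on $L$, keeping track of the four cases determined by whether $w$ and $w^{-1}$ lie in $A$.

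The heart of the proof is showing that these relations are complete. I would define a complexity function on conjugacy classes of tuples of cyclically reduced words in $A_\Gamma$ (the natural analogue of Whitehead's length), and then establish a peak-reduction lemma: if a word $\varphi_k \cdots \varphi_1$ in Whitehead automorphisms representing some $\varphi \in \Aut(A_\Gamma)$ passes through a complexity peak at some intermediate stage, then it is equivalent modulo (R1)--(R10) to a word whose maximum complexity is strictly smaller. Iterating, every word representing the identity reduces to the empty word using only the given relations, which yields the presentation by a standard argument on presentations compatible with a length function.

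The main obstacle is the peak-reduction step itself. For a peak of the form $(A,v)(B,w)$, the analysis splits according to the mutual positions of $v,v^{-1},w,w^{-1}$ in $A$ and $B$, whether the supports $A$ and $B$ are disjoint, and whether $w \in \lk(v)$. Each case produces exactly one of the rewriting rules (R3), (R4) or (R5), and one then has to verify that the resulting word has strictly lower maximum complexity on the intermediate conjugacy classes. Relations (R9) and (R10) play the same role for peaks involving the inner-type automorphisms $(L - w^{-1}, w)$, while (R1), (R2), (R6) and (R7) handle elementary simplifications and the interaction with the symmetric group on $L$. Packaging this case analysis into a single clean statement, and verifying that the strict length decrease holds in each case, is the delicate combinatorial core of the argument.
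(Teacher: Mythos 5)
The paper does not actually prove this statement: it is quoted directly from Day's work \cite{Day}, so there is no internal proof to compare against. Your outline does identify the broad shape of Day's argument correctly --- generation via Laurence--Servatius, direct verification that (R1)--(R10) hold, and peak reduction for completeness --- but the central step, as you describe it, would not go through for a general graph $\Gamma$.

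The gap is this: the complexity function on tuples of conjugacy classes of cyclically reduced words admits peak reduction only with respect to the \emph{long-range} Whitehead automorphisms, i.e.\ the type (2) automorphisms $(A,v)$ that fix every generator adjacent to $v$. In the free-group case every type (2) Whitehead automorphism is long-range (all links are empty), which is why the classical Whitehead--McCool argument can be run uniformly over all Whitehead automorphisms; but for general $\Gamma$ the \emph{short-range} automorphisms (those fixing every generator \emph{not} adjacent to $v$) do not interact with cyclic word length in a way that allows peaks to be lowered, and the peak-reduction lemma you propose is false if they are included. Day's actual proof therefore first splits every $\alpha\in\Aut(A_\Gamma)$ as $\alpha=\beta\gamma$ with $\beta$ a product of short-range and $\gamma$ a product of long-range automorphisms (his Theorem A.1, achieved by ``sorting substitutions'' which are themselves consequences of (R1)--(R10)); proves peak reduction, and hence a presentation, only for the subgroup generated by the long-range automorphisms; obtains a presentation of the short-range subgroup by a separate argument (that subgroup is closely related to another right-angled Artin group); and then assembles the two presentations. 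This short-range/long-range decomposition is not an optional refinement --- it is where the real work lies, and it is precisely the structure that the present paper exploits later in its sketch of the presentation of $\Aut^0(A_\Gamma)$ (Proposition \ref{prop:day}). Your proposal as written collapses at the peak-reduction lemma unless this decomposition is introduced.
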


\begin{remark}
In Day's list of relations \cite{Day} there is an extra type of relator, which Day calls (R8); however, as he mentions, this relation is redundant and therefore we omit it from the list above. 
\end{remark}

\subsection{Torelli group} Observe that $H_1(A_\Gamma, \Z) = \Z^n$, where $n$ is the number of vertices of $\Gamma$. Therefore we have a natural homomorphism 
$$\Aut(A_\Gamma) \to \Aut(H_1(A_\Gamma, \Z)) = \GL(n, \Z).$$ The kernel of this homomorphism, which we will denote by $\CI A_\Gamma$, is called the {\em Torelli subgroup} of $\Aut(A_\Gamma)$. Observe that every partial conjugation is an element of $\CI A_\Gamma$. We now describe another type of element of $\CI A_\Gamma$ that will be needed in the sequel. Let $u,v,w$ be vertices with  $\lk(v) \subset \st(u) \cap \st(w)$. Consider the automorphism $\tau_{u,v,w}$ of $A_\Gamma$ given by $$\Bigg\{\begin{aligned}
\tau_{u,v,w}(v)&=v[u,w]\\
\tau_{u,v,w}(z)&=z,\, z\neq v.\\
\end{aligned}$$
Every automorphism of the above form will be referred to as a $\tau$-map. Day proved that partial conjugations and $\tau$-maps suffice to generate $\CI A_\Gamma$; see Theorem B of \cite {Day2}, or Theorem 4.7 of \cite{Wade} for an alternate proof.

\begin{theorem}[\cite{Day2}]
 The Torelli group $\CI A_\Gamma$ is finitely generated by the set of partial conjugations and $\tau$-maps.
\label{torelligen}
\end{theorem}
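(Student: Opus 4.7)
My plan is to derive Theorem \ref{torelligen} from Day's finite presentation of $\Aut(A_\Gamma)$ (Theorem \ref{thm-day}), combined with a presentation of the image group $Q := \Aut(A_\Gamma)/\CI A_\Gamma < \GL(n,\Z)$. The first step is the easy direction: every partial conjugation $c_{v,Y}$ acts trivially on $H_1(A_\Gamma,\Z) = \Z^n$ since conjugation is trivial on abelianization; and the $\tau$-map $\tau_{u,v,w}$ sends $v$ to $v[u,w]$, whose image in homology is $v$, so $\tau_{u,v,w} \in \CI A_\Gamma$ as well. Thus the subgroup generated by partial conjugations and $\tau$-maps is contained in $\CI A_\Gamma$.

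For the converse inclusion I would invoke the standard principle that, given a short exact sequence $1 \to K \to G \to Q \to 1$ together with presentations $\langle S \mid R \rangle$ of $G$ and $\langle \bar S \mid \bar R, R' \rangle$ of $Q$ (with $\bar S$ the image of $S$ and $\bar R$ the image of $R$), the kernel $K$ is normally generated in $G$ by lifts of the extra relators $R'$. Here $G = \Aut(A_\Gamma)$ is controlled by Theorem \ref{thm-day}, and $Q$ sits in $\GL(n,\Z)$ via the map $\Phi$ sending each transvection $t_{vw}$ to an elementary matrix, each inversion to a diagonal sign change, each graphic automorphism to a permutation matrix, and each partial conjugation to the identity. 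Starting from Day's presentation, I would first adjoin the relations $c_{v,Y} = 1$ for every partial conjugation: these lift to the partial conjugations themselves, so they are already in our prospective generating set.

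The technical heart of the argument is to identify the further relations that must be added to obtain a presentation of $Q$. These should be precisely the Steinberg-type commutator relations of the form $[E_{vw}, E_{wx}] = E_{vx}$ (together with their graph-ordered variants), which hold among elementary matrices in $\GL(n,\Z)$ but need not hold among the corresponding transvections in $\Aut(A_\Gamma)$. For such a relation, the discrepancy $[t_{vw}, t_{wx}] \cdot t_{vx}^{-1}$ lies in $\CI A_\Gamma$, and a direct unwinding using the definition of transvections shows that it is a $\tau$-map (one built from the triple $u=w$, $v$, with the third vertex chosen from the surrounding link), modulo possibly a partial conjugation. I would work through the list of such Steinberg-type identities in $\GL(n,\Z)$ case-by-case, checking that each lift yields a product of $\tau$-maps and partial conjugations.

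Since the preceding argument produces $\CI A_\Gamma$ only as a \emph{normal} subgroup generated by these lifts, the last step is to show that the conjugate of a partial conjugation or a $\tau$-map by any Laurence-Servatius generator is again a product of the same. This is a direct computation, using Day's conjugation relations (R3)--(R6) to push the conjugating automorphism through the Whitehead automorphism $(A,v)$ that represents a partial conjugation or $\tau$-map. I expect the main obstacle to lie in the previous paragraph: pinning down exactly which Steinberg relations must be added to Day's presentation (after killing partial conjugations) in order to cut out $Q$, and verifying in each case that the lifted relator admits a tidy expression as a product of $\tau$-maps and partial conjugations. The redeeming feature is that Day's relations (R5), (R9) and (R10) already encode the non-abelian interaction of transvections in the presence of the partial order $\le$, so one should be able to read off the required $\tau$-map corrections essentially directly from those relators.
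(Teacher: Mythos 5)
First, note that the paper offers no proof of this statement: it is quoted from Day (Theorem B of \cite{Day2}), with an alternate proof in Wade's thesis (Theorem 4.7 of \cite{Wade}), so there is no internal argument to compare yours against. Your outline is closest in spirit to Wade's route, which does pass through Day's presentation of $\Aut(A_\Gamma)$ and a presentation of the image of $\Aut(A_\Gamma)$ in $\GL(n,\Z)$.

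That said, as written the proposal has a genuine gap at its centre. The ``standard principle'' you invoke requires, as input, a presentation of the quotient $Q=\Aut(A_\Gamma)/\CI A_\Gamma$; but proving that Day's relations together with $c_{v,Y}=1$ and your Steinberg-type relations actually present $Q$ --- i.e.\ that the resulting quotient injects into $\GL(n,\Z)$ --- is essentially equivalent to the theorem you are trying to prove (it is the content of Wade's Proposition 4.11, which this paper uses elsewhere). Nothing in your sketch addresses this: for $\SL(n,\Z)$ alone the analogous presentation statement is a nontrivial theorem, and here $Q$ is a block-lower-triangular group for which no off-the-shelf presentation exists. There is also a concrete misidentification: the relation $[E_{vw},E_{wx}]=E_{vx}$ lifts to an exact identity $[t_{vw},t_{wx}]=t_{vx}^{\pm1}$ in $\Aut(A_\Gamma)$ (each of these transvections touches only one generator, and the computation closes up with no error term), so it contributes nothing to the Torelli group; the $\tau$-maps instead arise as lifts of the \emph{commuting} Steinberg relations $[E_{vu},E_{vw}]=1$ (same row, distinct columns), via $\tau_{u,v,w}=[t_{vu},t_{vw}]$, and you would additionally have to treat the relators internal to the diagonal $\SL(n_i,\Z)$-blocks as well as those involving inversions and graphic automorphisms. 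Finally, the passage from normal generation to generation (conjugates of partial conjugations and $\tau$-maps being products of the same) is asserted rather than proved; it is true, but it requires its own case analysis. In short: the skeleton is reasonable and recognisably Wade's, but the step that does all the work --- exhibiting and verifying a presentation of $Q$ --- is missing.
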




\section{Proof of Theorem \ref{thma}}
\label{s:thma}

In this section we will give a proof of Theorem \ref{thma}. Recall from the introduction that a simplicial graph $\Gamma$ has property (B) if the following two conditions hold:
\begin{enumerate}
\item[(B1)] For all $u,v \in V(\Gamma)$ with $u \notin \lk(v)$, we have $u \sim v$;
\item[(B2)] For all $v,w \in V(\Gamma)$ with $v\le w$, there exists $u\in V(\Gamma)$ such that $u \ne v,w$ and  $v\le u\le w$. 
\end{enumerate} 

As mentioned in the introduction, the fact that the graph $\Gamma$ has property (B) turns out to be the graph-theoretic interpretation of the fact that $A_\Gamma$ splits as a direct product of free groups of rank different from 2. More concretely, we have:

\begin{lemma} Let $\Gamma$ be a finite simplicial graph, and $A_\Gamma$ the associated right-angled Artin group. 
 The following two statements are equivalent: 
 \begin{enumerate}
 \item  $\Gamma$ satisfies property (B). 
 \item   $A_\Gamma \cong F_{n_1} \times \ldots \times F_{n_k}\times\Z^a,$ where $n_1,\ldots,n_k> 2$ and $a\neq 2$. 
 \end{enumerate}
 If these statements hold, the numbers $n_1,\ldots, n_k$, and $a$ are the sizes of the equivalence classes $[v]$.
 \label{propertyB}
 \end{lemma}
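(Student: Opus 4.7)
The plan is to analyse the equivalence classes of $V(\Gamma)$ under $\sim$, show that property (B) forces each class to be either a clique or an anticlique (i.e.\ an edgeless induced subgraph) of controlled size, and then read off the direct-product decomposition of $A_\Gamma$ from the resulting join decomposition of $\Gamma$. For the converse, I would identify $\Gamma$ with a concrete join of graphs and verify (B1) and (B2) by a short case analysis.

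For the forward direction, the first observation is that (B1) forces any two vertices in distinct equivalence classes to be adjacent: non-adjacency together with non-equivalence would directly contradict (B1). Hence $\Gamma$ is a join of its equivalence classes. Next I would establish that each equivalence class $[v]$ is a clique or an anticlique. The key local fact is that if $a, b \in [v]$ are distinct and non-adjacent then $\lk(a) = \lk(b)$, whereas if they are adjacent then $\lk(a) \setminus \{b\} = \lk(b) \setminus \{a\}$; three elements of $[v]$ exhibiting a mixed adjacency pattern then contradict one of these equalities. Combined with (B1), this shows that any vertex lying in a clique class (or a singleton class) is adjacent to every other vertex of $\Gamma$, i.e.\ is universal; conversely, any two universal vertices are equivalent, so they comprise a single equivalence class $C$. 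Thus $\Gamma = A_1 * \cdots * A_k * C$, with $A_1, \ldots, A_k$ the non-universal anticlique classes. Finally, (B2) rules out $|A_i| = 2$: if $[v] = \{v, w\}$ were a non-universal anticlique class, then $v \le w$ together with any interpolating $u$ would force $u \le w \le v$ and hence $u \in [v]$, a contradiction. The same principle excludes $|C| = 2$: any $u$ with $v \le u$ for $v \in C$ must have $V(\Gamma) \setminus \{v\} \subseteq \st(u)$ and therefore be universal, i.e.\ lie in $C$. We conclude $A_\Gamma \cong F_{n_1} \times \cdots \times F_{n_k} \times \Z^a$ with $n_i = |A_i| \ge 3$ and $a = |C| \ne 2$, and simultaneously the equivalence-class sizes are exactly $n_1, \ldots, n_k, a$.

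For the converse, assume $A_\Gamma$ has the stated product structure. Droms' rigidity theorem identifies $\Gamma$ with the corresponding join $\Gamma_1 * \cdots * \Gamma_k * K_a$, where each $\Gamma_i$ is the discrete graph on $n_i \ge 3$ vertices and $K_a$ is the complete graph on $a \ne 2$ vertices. Then (B1) is immediate, since any two non-adjacent vertices of $\Gamma$ lie in a common factor $\Gamma_i$ and all vertices of $\Gamma_i$ share the link $V(\Gamma) \setminus V(\Gamma_i)$. For (B2), a short case analysis on the location of $v$ and $w$ produces an interpolating $u$: if $v, w$ both lie in some $\Gamma_i$ or both in $K_a$, the cardinality bound $n_i \ge 3$ (respectively $a \ge 3$, which follows from $a \ne 2$ and $|K_a| \ge 2$) furnishes a third vertex of that factor with the same link; and in the remaining case $v$ lies in some $\Gamma_i$ while $w \in K_a$, where any other vertex of $\Gamma_i$ interpolates. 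The main obstacle is the size-$2$ analysis in the forward direction, which requires careful dual applications of (B2) to exclude both an anticlique class of size $2$ and a universal clique of size $2$; beyond this the argument reduces to elementary graph theory together with the standard dictionary between join decompositions of $\Gamma$ and direct-product decompositions of $A_\Gamma$.
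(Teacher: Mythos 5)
Your proposal is correct and follows essentially the same route as the paper: use (B1) to get cross-class adjacency and hence a join decomposition into classes that are each cliques or anticliques, observe that the universal vertices form the unique clique class, and use (B2) to exclude classes of size two. The only real difference is cosmetic -- you exclude size-two classes purely order-theoretically (any interpolant between equivalent vertices is itself equivalent to them, by transitivity of $\le$), where the paper argues via adjacency, and you spell out the converse verification that the paper delegates to the Droms--Laurence isomorphism theorem; both are fine.
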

  \begin{proof}
 The implication (ii) $\implies$ (i) follows from the {\em Isomorphism Theorem} of Droms \cite{Droms} and Laurence \cite{Laurence}: two right-angled Artin groups are isomorphic if and only if the defining graphs are isomorphic. 
 
 Suppose now that $\Gamma$ satisfies property (B).  Given a vertex $v\in V(\Gamma)$, denote by $\Gamma_{[v]}$  the full subgraph of $\Gamma$ spanned by the elements of $[v]$. 

 We claim that, for every $v\in V(\Gamma)$, the graph $\Gamma_{[v]}$ is either complete or totally disconnected. To see this, suppose that  $\Gamma_{[v]}$ has two vertices $w_1,w_2$ with $w_2\in \lk(w_1)$, and note that  $w_1 \in\lk(w_2)$ also. Consider a third vertex $u \ne w_1,w_2$ in $\Gamma_{[v]}$ . Since $u \sim w_1$ and $w_2 \in \lk(w_1)$ then $w_2 \in \st(u)$. Similarly, $w_1 \in \st(u)$, and hence the claim follows. 
 
 Second, observe that property (B1) implies that if $[v] \ne [w]$ then every vertex of  $\Gamma_{[v]}$ is adjacent to every vertex of $\Gamma_{[w]}$; indeed, if there exist $v' \in \Gamma_{[v]}$ and $w'\in \Gamma_{[w]}$ with $v' \notin \lk(w')$ then we would have $v'\sim w'$, by (B1), which contradicts the assumption $[v] \ne [w]$.  In particular, this also implies that there is at most one equivalence class $[v]$ for which $\Gamma_{[v]}$ is a complete graph. 
 
Now, the first claim above implies that $A_{\Gamma_{[v]}}$ is either a free abelian group (in the case when $\Gamma_{[v]}$ is a complete graph) 
 or a non-abelian free group (in the case when $\Gamma_{[v]}$  is totally disconnected). In turn, the second claim gives that if   $[v] \ne [w]$ then every element of  $A_{\Gamma_{[v]}}$ commutes with every element of $A_{\Gamma_{[w]}}$.  In other words, we have deduced that 
 \begin{equation}
A_\Gamma \cong F_{n_1} \times \ldots \times F_{n_k}\times \Z^a,
\label{eqdirect}
\end{equation}
where $n_1, \ldots, n_k$ are the cardinalities of equivalence classes whose elements span a totally disconnected graph, while $a$ is the cardinality of the equivalence class whose elements span a complete graph. 

Finally, observe that $n_i \ne 2$ for all $i$ in virtue of (B2) above. Indeed, if $F_{n_1} = \langle v, w \rangle$, we claim the vertex $u$ given in (B2) cannot exist. Otherwise, $u$ would belong to a different  factor of $A_\Gamma$ in the direct product decomposition given in equation (\ref{eqdirect}) above, which implies that $v \in \lk(u)$. However, $v\notin \st(w)$, which contradicts that $ u \le w$. 
Similarly,  we claim that $a\ne2$ also. To see this, suppose that $a=2$ and denote the abelian factor by $\langle v\rangle\times\langle w\rangle$. Then the vertex $u$ given by (B2) cannot exist, for if $v\leq u$ then $\lk(v)\subseteq\st(u)$, which implies  $u=w$ as $\lk(v)=\Gamma-v$.  This finishes the proof of the lemma.
 \end{proof}

 The key ingredient in the proof of Theorem \ref{thma} will be to understand the abelianization of a certain finite-index subgroup of the {\em $m$-Torelli subgroup} $\CI A_\Gamma (m)$ of $\Aut(A_\Gamma)$, which is defined as the kernel of the homomorphism 
$$\Aut(A_\Gamma) \to \GL(n, \Z) \to \GL(n, \Z_m),$$ where the second arrow is given by reducing matrix entries modulo $m$ and    $n$ is the number of vertices of $\Gamma$.
 
 We denote 
$$\SCI A_\Gamma(m):=\SAut^0(A_\Gamma)\cap\CI A_\Gamma(m).$$
Observe that this subgroup is the kernel of the homomorphism 
$$\SAut^0(A_\Gamma) \to \SL(n, \Z) \to \SL(n, \Z_m)$$
and that $\SCI A_\Gamma(m)$ has finite index in  $\Aut(A_\Gamma)$ for every $m\ge 0$. 
Now, for $n\geq 3$ the kernel of the homomorphism $\SL(n, \Z) \to \SL(n, \Z_m)$ is normally generated by $m$-powers of transvections in $\SL(n, \Z)$ \cite{BMS}; recall that a transvection in $\SL(n,\Z)$ is a matrix of the form $T_{ij}:=I_n + E_{ij}$ for $i\ne j$, where $I_n$ is the identity matrix and $E_{ij}$ is the matrix that has a 1 in the $(i,j)$ position and zeroes elsewhere. 

We need to analyze the image $\mathcal H$ of $\SAut^0(A_\Gamma)$ under the  natural homomorphism $\Aut(A_\Gamma) \to \GL(n,\Z)$, as done in \cite{Wade}. Consider the partition of $V(\Gamma)$ given by the equivalence relation $\sim$, and order the classes
$\{[v_1],\ldots,[v_k]\}$ in ascending order, i.e. so that $v_j\leq v_i$ implies $j\leq i$. Then  there is a transvection $t_{vw}\in\SAut^0(A_\Gamma)$ if and only if $v\in[v_j]$ and $w\in[v_i]$ with $v_j\leq v_i$. As $\mathcal{H}$ is generated by the images of the transvections in $\SAut^0(A_\Gamma)$,  we see that any matrix in $\mathcal{H}$ is block lower triangular, with the sizes of the diagonal blocks corresponding to the cardinality of each of the clases. The transvections in $\mathcal{H}$ are precisely those of the form $T_{r_ic_j}=I_n + E_{r_ic_j}$ for some $(i,j)$ such that $v_j\leq v_i$ and with $r_i$ corresponding to the block $[v_i]$, $c_j$ corresponding to the block $[v_j]$. Moreover, for any matrix $M\in\mathcal{H}$ and  any $1\leq i,j\leq k$, if the $(i,j)$-block of $M$ is not zero then we must have $v_j\leq v_i$. The fact that the product of two matrices of this form is again a matrix of this form follows from the transitivity of the partial order $\leq$ on $V(\Gamma)$.

Using the previous facts we are going to prove the following technical lemma, which will be used in the proof of Theorem \ref{thma}:

\begin{lemma} Assume that there is no class $[v]$ with respect to $\sim$ that has exactly 2 elements. Then for all $m\ge 0$, $\SCI A_\Gamma(m)$ is normally generated by $\CI A_\Gamma$ and all $m$-th powers of transvections in $\Aut(A_\Gamma)$. 
\label{m-torelligen}
\end{lemma}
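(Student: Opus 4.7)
The plan is to reduce modulo the Torelli subgroup and then analyze the structure of $\mathcal{H}$ as a block-parabolic subgroup of $\SL(n,\Z)$. Since $\CI A_\Gamma\subseteq \SCI A_\Gamma(m)$ and $\CI A_\Gamma$ is among the proposed generators, it is enough to prove that the image
$$\mathcal{H}(m)\;:=\;\SCI A_\Gamma(m)/\CI A_\Gamma\;\cong\;\ker\bigl(\mathcal{H}\to \SL(n,\Z/m)\bigr)$$
is normally generated in $\mathcal{H}$ by the matrix transvections $T^m$, i.e.\ by the images of $t_{vw}^m$ for transvections $t_{vw}\in \Aut(A_\Gamma)$.

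Using the block description of $\mathcal{H}$ recalled just before the lemma, one reads off the Levi decomposition
$$\mathcal{H}\;=\;U\rtimes L,\qquad L\;=\;\prod_{i=1}^{k} L_i,\quad L_i\cong \SL(n_i,\Z),\quad n_i=|[v_i]|,$$
where $U$ is the unipotent radical, consisting of block lower triangular matrices with identity diagonal blocks and nonzero entries only at the allowed root positions $(r,c)$ with $r\in[v_i]$, $c\in[v_j]$, $v_j<v_i$. A direct inspection of the reduction mod~$m$ on each block gives the descent $\mathcal{H}(m)=U(m)\rtimes L(m)$, with $L(m)=\prod_i L_i(m)$. It therefore suffices to analyze $L(m)$ and $U(m)$ separately.

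For the Levi factor, the hypothesis forces each $n_i$ to be $1$ or $\geq 3$. When $n_i=1$ the factor is trivial; when $n_i\geq 3$, the Bass--Milnor--Serre solution to the congruence subgroup problem gives that $L_i(m)$ is normally generated in $L_i=\SL(n_i,\Z)$ by $m$-th powers of elementary matrices, which are precisely the images of the within-class transvections $t_{vw}^m$ with $v,w\in[v_i]$. For the unipotent factor, note that $U$ is a finitely generated torsion-free nilpotent group; after ordering the root positions compatibly with the lower central series of $U$, the transvection matrices $T_1,\dots,T_N\in U$ form a Mal'cev basis, so every $u\in U$ has a unique normal form $u=T_1^{a_1}\cdots T_N^{a_N}$. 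A short induction on the root order shows that the $(r_k,c_k)$-entry of $u$ equals $a_k$ plus a polynomial in the previous $a_\ell$'s, so $u\equiv I\pmod{m}$ forces every $a_k\equiv 0\pmod{m}$. Hence every element of $U(m)$ is already a product $(T_1^m)^{a_1'}\cdots (T_N^m)^{a_N'}$ of $m$-th powers of transvection matrices (no normal closure needed here). Combining the two pieces and lifting along $\SAut^0(A_\Gamma)\to \mathcal{H}$ yields the lemma.

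\textbf{Main obstacle.} The only real difficulty is the Levi factor: one must invoke Bass--Milnor--Serre, whose conclusion fails for $\SL_2(\Z)$. An equivalence class $[v]$ of size exactly $2$ would produce an $\SL_2(\Z)$-factor inside $L$ whose principal congruence subgroup $\Gamma(m)$ (for $m\geq 3$) is not normally generated by $m$-th powers of elementary matrices; this is precisely the failure that the hypothesis is designed to avoid. The unipotent piece, by contrast, is a routine Mal'cev-coordinate computation in a torsion-free nilpotent matrix group.
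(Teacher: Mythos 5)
Your proof is correct and follows essentially the same route as the paper: both reduce to showing that $\ker(\mathcal{H}\to\SL(n,\Z_m))$ is the normal closure of the $m$-th powers of the matrix transvections, both invoke Bass--Milnor--Serre on each diagonal $\SL(n_i,\Z)$-block (which is exactly where the no-size-$2$-class hypothesis enters), and both then clear the off-diagonal part using that its entries are divisible by $m$. Your Levi-decomposition/Mal'cev packaging of the unipotent step is just a tidier phrasing of the paper's sequential "fix the diagonal blocks, then kill the off-diagonal entries by row operations with $T_{r_ic_j}^m$" argument.
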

\begin{proof} Let $\mathcal{H}(m)$ be the kernel of the map $\mathcal{H}\to\SL(n,\Z_m)$ and $T$ be the group normally generated by the $m$-powers of the transvections in $\mathcal{H}$. We claim that $\mathcal{H}(m)=T$, which in turn implies the desired result.

 We obviously have $T\subseteq \mathcal{H}(m)$, so we only need to prove the reverse inclusion. Let $M$ be a matrix in $\mathcal{H}(m)$; by the above discussion $M$ is a block lower triangular matrix with diagonal blocks corresponding to the classes  $\{[v_1],\ldots,[v_k]\}$. Observe also that if there was some class with only one element then the corresponding block diagonal entry of $M$ would be  a single 1. In addition, note that all the matrices in the diagonal blocks of $M$ are a product of transvections in $\GL(n_i, \Z)$ where $n_i$ is the number of elements in the class $[v_i]$; in particular such diagonal blocks are elements of $\SL(n_i,\Z)$. 
Let $M_1$ be the matrix in the first diagonal block of $M$. As mentioned above, if $[v_1]$ has only one element, then $M_1$ is a 1; otherwise, the hypothesis implies that the size $n_1$ of $M_1$ is at least 3. Observe that $M_1$ lies in the kernel of the map from $\GL(n_1,\Z)$ to $\GL(n_1,\Z_m)$, and therefore $M_1$ is a  product of conjugates (in $\SL(n_1,\Z)$) of $m$-powers of transvections in $\SL(n_1,\Z)$.

Embed $\SL(n_1,\Z)$ into $\SL(n,\Z)$ via the first diagonal block. Via this embedding, the previous expression of $M_1$ as  a  product of conjugates of $m$-powers of transvections in $\SL(n_1,\Z)$ yields a matrix $N_1\in T$ such that  $MN_1$ has the first diagonal block equal to $I_{n_1}$. Now, we may repeat the argument with the rest of the diagonal blocks and find matrices $N_2,\ldots,N_k\in T$ such that  $Q=N_1\ldots N_k \in T$ (and in particular lies in $\mathcal{H}(m)$) and $MQ$ has every diagonal block equal to the identity matrix. 
To finish the proof, we claim that there is some matrix $P\in T$ such that $PMQ$ is the identity. This $P$ should be the result of left multiplying $MQ$   by elementary matrices in $T$ so that the associated row operation kills the non-diagonal entries.    To see that this is possible, note that  all the non-diagonal entries of $MQ$ are multiples of $m$ and that if there is some non-zero entry in the subblock $(i,j)$ then by the observations above over $\mathcal{H}$ we must have $v_j\leq v_i$. This in turn implies that any transvection $T^m_{r_ic_j}=I_n+mE_{r_ic_j}$ lies in $T$ and as these are precisely the kind of transvections that we need we get the result.
\end{proof}

As indicated above, the proof of Theorem \ref{thma} boils down to understanding the abelianization of $\SCI A_\Gamma(m)$, which we describe in the next proposition. Given a group $G$, we denote by $G'$ its commutator subgroup $ [G,G]$. Also, we denote by $G^{ab}$ the abelianization of $G$, i.e. $G^{ab}= G/G'$. We will show: 

\begin{proposition}
Let $\Gamma$ be a graph that satisfies property (B). For every $m\ge 0$, the abelianization  of $\SCI A_\Gamma(m)$ is a finite $m$-group. In other words, $\SCI A_\Gamma(m)^{ab}$ is finite and the order of every element divides $m$.
\label{m-torelliab}
\end{proposition}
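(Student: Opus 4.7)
The plan is to identify a finite normal generating set for $\SCI A_\Gamma(m)$ inside $\SAut^0(A_\Gamma)$, deduce finite generation of its abelianization, and then argue that each generator is $m$-torsion in that abelianization. First, property (B2) forces every $\sim$-equivalence class in $V(\Gamma)$ to have size $1$ or at least $3$: if $v\sim w$ with $v\ne w$, then (B2) applied to $v\le w$ produces $u\ne v,w$ with $v\le u\le w$, whence $u\sim v$. Thus Lemma \ref{m-torelligen} applies; combined with Theorem \ref{torelligen}, $\SCI A_\Gamma(m)$ is normally generated in $\SAut^0(A_\Gamma)$ by the finite set consisting of the partial conjugations $c_{v,Y}$, the $\tau$-maps $\tau_{u,v,w}$, and the $m$-th powers $t_{vw}^m$ of transvections.

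Second, the conjugation action of $\SAut^0(A_\Gamma)$ on $\SCI A_\Gamma(m)^{ab}$ factors through the finite quotient $Q=\SAut^0(A_\Gamma)/\SCI A_\Gamma(m)\hookrightarrow \SL(n,\Z_m)$. Therefore $\SCI A_\Gamma(m)^{ab}$ is finitely generated as an abelian group, being spanned by the finitely many $Q$-orbits of the normal generators above.

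Third, we argue that each generator has order dividing $m$ in $\SCI A_\Gamma(m)^{ab}$. For an $m$-th power of a transvection $t_{vw}^m$, property (B2) furnishes $u\ne v,w$ with $v\le u\le w$. The Steinberg-type identity $T_{vw}^{m^2}=[T_{vu}^m,T_{uw}^m]$ in $\SL(n,\Z)$ lifts via Day's presentation (Theorem \ref{thm-day}) to an equation $t_{vw}^{m^2}=[t_{vu}^m,t_{uw}^m]\cdot\xi$ with $\xi\in\CI A_\Gamma\subseteq\SCI A_\Gamma(m)$. Since $t_{vu}^m$ and $t_{uw}^m$ both lie in $\SCI A_\Gamma(m)$, their commutator is trivial in the abelianization, giving $m[t_{vw}^m]=[\xi]$ in $\SCI A_\Gamma(m)^{ab}$. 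A parallel argument, exchanging the roles of the vertices and invoking Day's relations (R3)--(R10), should express $[\xi]$ as a further sum of commutators of $\SCI A_\Gamma(m)$-elements, forcing $[\xi]=0$ and hence $m[t_{vw}^m]=0$. For partial conjugations $c_{v,Y}$ and $\tau$-maps $\tau_{u,v,w}$, we use Day's relations (R3)--(R5) to express appropriate conjugates by transvections as products of $m$-th powers of transvections modulo $[\SCI A_\Gamma(m),\SCI A_\Gamma(m)]$, which again bounds their orders by $m$.

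The main obstacle lies in this third step: we must carefully track the ``error'' element $\xi\in\CI A_\Gamma$ arising from each lift of a matrix identity to $\SAut^0(A_\Gamma)$, and verify that it becomes trivial in $\SCI A_\Gamma(m)^{ab}$. Since partial conjugations and $\tau$-maps are \emph{not} $m$-torsion in $\CI A_\Gamma^{ab}$ in general, the argument must genuinely exploit the additional relations in $\SCI A_\Gamma(m)^{ab}$ coming from the fact that $m$-th powers of transvections (but not transvections themselves) lie in the congruence subgroup.
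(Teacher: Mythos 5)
Your overall skeleton matches the paper's: reduce to the normal generating set (partial conjugations, $\tau$-maps, $m$-th powers of transvections) via Lemma \ref{m-torelligen} and Theorem \ref{torelligen}, note finite generation of the abelianization, and show each generator is $m$-torsion there. Your observation that (B2) alone forces every $\sim$-class to have size $1$ or at least $3$ is correct and slightly cleaner than the paper's appeal to Lemma \ref{propertyB}. But your third step has a genuine gap, and you have flagged it yourself without closing it: you never produce the identity that makes partial conjugations and $\tau$-maps $m$-torsion in $\SCI A_\Gamma(m)^{ab}$. Saying that Day's relations (R3)--(R5) ``should'' express suitable conjugates as products of $m$-th powers of transvections is not an argument, and as you note, these elements are \emph{not} $m$-torsion in $\CI A_\Gamma^{ab}$ in general, so something specific to property (B) must enter. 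The missing ingredient is property (B1), which you do not use at all in this step. Under (B1), for $v\notin\lk(w)$ the complement $\Gamma-\st(w)$ is totally disconnected, so the relevant partial conjugations are single-vertex ones $c_{w,v}$, and moreover $v\sim w$, so the transvection $t:=t_{vw^{-1}}$ exists. The paper's ``crossed lantern'' computation (Lemma \ref{crossed}) then gives the exact identity $c_{w,v}^m=[t^m,c_{v,w}^{-1}]$, a commutator of two elements of $\SCI A_\Gamma(m)$; this is what kills $c_{w,v}^m$ in the abelianization, and the $\tau$-map case reduces to it via $\tau=c^{-1}c^{t_{vw}}$. Without an identity of this kind your argument for these two families of generators does not go through.

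A secondary, smaller issue: in the transvection case you introduce an error term $\xi\in\CI A_\Gamma$ with $t_{vw}^{m^2}=[t_{vu}^m,t_{uw}^m]\xi$ and then assert that $[\xi]=0$ in the abelianization by ``a parallel argument.'' In fact no separate vanishing argument is needed: when $u$ commutes with $v$ and $w$ the Steinberg identity holds on the nose, and when it does not one computes $[t_{vu}^m,t_{uw}^m]=c^{-m}(ct_{vw}^{-m})^m$ with $c=c_{u,v}$, so that modulo $\SCI A_\Gamma'(m)$ the $c$-contributions cancel as $c^{-m}c^m$ and one gets $t_{vw}^{-m^2}\in\SCI A_\Gamma'(m)$ directly. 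So that case is salvageable by an explicit computation, but the partial conjugation and $\tau$-map cases require the (B1)-based crossed lantern relation, which is absent from your proposal.
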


\begin{remark}
As mentioned in the introduction, Proposition \ref{m-torelliab} is implied by the work of Satoh \cite{satoh} when $A_\Gamma$ is a free group.
\end{remark}

The proof of Proposition \ref{m-torelliab} is broken down into a series of lemmas. Throughout the remainder of the paper, given $\alpha, \beta \in \Aut(A_\Gamma)$, we will write $\alpha^\beta$ to mean the conjugate of $\alpha$ by $\beta$, i.e.  $\alpha^\beta:= \beta^{-1} \alpha \beta$. We begin with an easy consequence of (B1).

\begin{lemma} Let $\Gamma$ be a  graph that satisfies (B1). For any $v\in \Gamma$ such that $\st(v)\neq\Gamma$, we have $\Gamma-\st(v)$ is totally disconnected.
In particular, there is a partial conjugation in $\Aut(A_\Gamma)$ of the form $c_{v,\{u\}}$, with $u\not\in\st(v)$.
\end{lemma}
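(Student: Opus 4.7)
The plan is a direct application of property (B1). Suppose $v\in V(\Gamma)$ has $\st(v)\ne \Gamma$, so $\Gamma-\st(v)$ is non-empty. To prove that this subgraph is totally disconnected, I would argue by contradiction: assume there are vertices $u_1,u_2\in\Gamma-\st(v)$ joined by an edge. Since $u_1\notin\st(v)$, in particular $u_1\ne v$ and $u_1\notin\lk(v)$, so $u_1$ and $v$ are non-adjacent. By property (B1), $u_1\sim v$; in particular, $\lk(u_1)\subset\st(v)$. But $u_2\in\lk(u_1)$ by assumption, which forces $u_2\in\st(v)$, contradicting the choice of $u_2$. Hence no such edge exists and $\Gamma-\st(v)$ is totally disconnected.

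For the "in particular" clause, once we know $\Gamma-\st(v)$ is totally disconnected, any vertex $u\notin\st(v)$ is a connected component of $\Gamma-\st(v)$ by itself. The Laurence--Servatius generator $c_{v,\{u\}}$ attached to this singleton component is therefore a well-defined partial conjugation in $\Aut(A_\Gamma)$.

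There is no real obstacle here: the argument is just an unpacking of the definitions of (B1), link and star, and the requirement that the subset used in a partial conjugation be a connected component of $\Gamma-\st(v)$. The only mild subtlety is remembering that ``not adjacent'' in (B1) implicitly requires the two vertices to be distinct, which is automatic since $v\notin\Gamma-\st(v)$ (as $v\in\st(v)$).
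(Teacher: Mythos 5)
Your argument is correct and is essentially the same as the paper's: both apply (B1) to a vertex $u\notin\st(v)$ to get $u\sim v$, hence $\lk(u)\subseteq\st(v)$, which forces $u$ to be isolated in $\Gamma-\st(v)$. The only cosmetic difference is that you phrase it as a contradiction while the paper argues directly; the handling of the ``in particular'' clause matches as well.
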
 
\begin{proof} Let $u\in\Gamma-\st(v)$.  Then (B1) implies $u\sim v$, which in turn gives $\lk(u)\subseteq\st(v)$. Thus we obtain that $u$ is an isolated vertex in $\Gamma - \st(v)$.  
\end{proof}

In order to relax notation, we will write $c_{v,u}:=c_{v,\{u\}}$.  The following result is an analog in our context of the {\em crossed lantern relation} for the mapping class group, see \cite{Putman}.

\begin{lemma}
Let $\Gamma$ be a graph with property (B1). Let  $v,w\in V(\Gamma)$ with $v \notin \lk(w)$ and consider  $c_1:= c_{w,v}$. Then there exist $t,c_2\in \Aut(A_\Gamma)$ such that $c_1^m = [t^m, c_2^{-1}]$ for all $m\ge 0$. In particular, $c_1^m \in \SCI A'_\Gamma(m)$ for all $m\ge 0$. 
\label{crossed}
\end{lemma}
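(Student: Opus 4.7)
The plan is to exhibit $t$ as a transvection and $c_2$ as (the inverse of) a partial conjugation, chosen so that a direct computation on the standard generators of $A_\Gamma$ yields the commutator identity, and then to verify by hand that both factors already sit in $\SCI A_\Gamma(m)$. The first step is to use property (B1): since $v \notin \lk(w)$, we get $v \sim w$, so in particular $v \le w$ and $w \le v$. This ensures that the transvection $t_{vw}$ exists, and, combining $v \sim w$ with the preceding lemma, that $\Gamma - \st(v)$ is totally disconnected. Consequently $\{w\}$ is a connected component of $\Gamma - \st(v)$, so the partial conjugation $c_{v,w}$ is a legitimate element of $\Aut(A_\Gamma)$.

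With these preparations I would set $t := t_{vw}$ and $c_2 := c_{v,w}^{-1}$, so that $[t^m, c_2^{-1}] = t^m c_{v,w} t^{-m} c_{v,w}^{-1}$. The heart of the proof is then a routine check on each generator of $A_\Gamma$. Applying the four factors from right to left to $v$, the conjugation $c_{v,w}$ interacts with $t^m$ precisely so that the $w^m$'s introduced by $t^{\pm m}$ migrate to the outside of $v$, producing $w^{-m} v w^m = c_1^m(v)$. On the generator $w$ itself, the conjugations inserted by $c_{v,w}^{\pm 1}$ cancel with the action of $t^{\pm m}$ on $v$, and all remaining generators are fixed by each of the four factors. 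This yields $c_1^m = [t^m, c_2^{-1}]$.

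It then remains to check the ``in particular'' clause. The transvection $t = t_{vw}$ lies in $\SAut^0(A_\Gamma)$ and its image in $\GL(n,\Z)$ is $I_n + E_{vw}$, so $t^m$ maps to $I_n + mE_{vw} \equiv I_n \pmod m$; hence $t^m \in \SCI A_\Gamma(m)$. The element $c_2$ is (the inverse of) a partial conjugation, which lies in $\SAut^0(A_\Gamma)$ and in the Torelli group $\CI A_\Gamma \subset \CI A_\Gamma(m)$; thus $c_2 \in \SCI A_\Gamma(m)$ for every $m$. Therefore $c_1^m = [t^m, c_2^{-1}] \in \SCI A'_\Gamma(m)$, as required.

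The only delicate point is keeping track of signs and of the order in which factors are applied in the commutator computation; once the correct convention $[a,b] = aba^{-1}b^{-1}$ is fixed, the verification on generators is mechanical, so I do not expect any serious obstacle beyond bookkeeping.
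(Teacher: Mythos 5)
Your proof is correct and follows essentially the same route as the paper: both exhibit $c_1^m$ as a commutator of the $m$-th power of a transvection between $v$ and $w$ (legitimate by (B1)) with the partial conjugation $c_{v,w}$ (legitimate because $\Gamma-\st(v)$ is totally disconnected), verified by direct computation on generators, and then note that both entries lie in $\SCI A_\Gamma(m)$. The only difference is cosmetic: the paper works with the convention $[a,b]=a^{-1}b^{-1}ab$, takes $t=t_{vw^{-1}}$ and $c_2=c_{v,w}$, proves the $m=1$ crossed-lantern relation $c_1=c_2^{t}c_2^{-1}$ together with $c_1^t=c_1$ and telescopes, whereas you make the mirror-image choices adapted to $[a,b]=aba^{-1}b^{-1}$ and check the power identity directly.
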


\begin{proof}
As $v \notin \lk(w)$, property (B1) implies that $v\sim w$. In particular, the transvection $t:=t_{vw^{-1}}$ 
is well-defined. Next, let $c_2:= c_{v,w}$.  We have   
$$tc_1(v)=t(w^{-1}vw)=w^{-1}t(v)w=w^{-1}v,$$
$$c_1t(v)=c_1(vw^{-1})=c_1(v)w^{-1}=w^{-1}v$$
thus $c_1=t^{-1}c_1t$.
On the other hand,
$$tc_1c_2(v)=tc_1(v)=w^{-1}v,$$
$$c_2t(v)=c_2(vw^{-1})=vv^{-1}w^{-1}v=w^{-1}v$$
and
$$tc_1c_2(w)=tc_1(v^{-1}wv)=t(w^{-1}v^{-1}www^{-1}vw)=v^{-1}wv=c_2(w)=c_2t(w),$$ so
 $c_1c_2=t^{-1}c_2t$.  Therefore $c_1=c_2^tc_2^{-1}$ and
$$\begin{aligned}c_1^m=c_1^{t^{m-1}}c_1^{t^{m-2}}\ldots c_1^{t}c_1
=c_2^{t^m}(c_2^{-1})^{t^{m-1}}c_2^{t^{m-1}}(c_2^{-1})^{t^{m-2}}\ldots c_2^{t^2}(c_2^{-1})^tc_2^tc_2^{-1}\\
=c_2^{t^m}c_2^{-1}=[t^m,c_2^{-1}].
\end{aligned},$$ as desired.
 \end{proof}
%
%
%

\begin{lemma}
Let $\Gamma$ be a graph with property (B1), and  $u,v,w\in V(\Gamma)$ with $\lk(v) \subset \st(u) \cap \st(w)$. Consider the $\tau$-map $\tau:=\tau_{u,v,w}$ given by $$\Bigg\{\begin{aligned}
\tau(v)&=v[u,w]\\
\tau(z)&=z,\, z\neq v.\\
\end{aligned}$$
Then $\tau^m\in\SCI A_\Gamma ' (m)$
\label{tau}
\end{lemma}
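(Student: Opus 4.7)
The strategy is to imitate the structure of Lemma \ref{crossed}: find a commutator decomposition of $\tau$ in $\Aut(A_\Gamma)$ involving a transvection $t$ (whose $m$-th power lies in $\SCI A_\Gamma(m)$) and a partial conjugation $c \in \CI A_\Gamma \subset \SCI A_\Gamma(m)$, and then bootstrap to $\tau^m$. Throughout, we may assume $u$, $v$, $w$ are pairwise non-adjacent and pairwise distinct: otherwise, since $\lk(v)\subset\st(u)\cap\st(w)$, one checks that $[u,w]=1$ in $A_\Gamma$ and $\tau$ is already trivial. Under this assumption, property (B1) ensures via the lemma preceding that $\{v\}$ is a connected component of $\Gamma-\st(w)$, so $c:=c_{w,v}$ is a well-defined partial conjugation.

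First I would verify the identity
\[
\tau \;=\; [\,t,\,c^{-1}\,], \qquad t := t_{v,u^{-1}},\quad c := c_{w,v},
\]
using the paper's convention $[a,b]=a^{-1}b^{-1}ab$. This is a direct computation: applied to $v$, one gets $(t^{-1} c\, t\, c^{-1})(v) = v[u,w] = \tau(v)$, while on $u$, $w$, and all other generators both sides are the identity.

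Second, the same computation with $t^m$ in place of $t$ shows
\[
[\,t^m,\, c^{-1}\,] : v \mapsto v[u^m,w],
\]
fixing the remaining generators. Since $t^m\in\SCI A_\Gamma(m)$ (as an $m$-th power of a transvection, by Lemma \ref{m-torelligen}) and $c^{-1}\in\CI A_\Gamma\subset\SCI A_\Gamma(m)$, this commutator lies in $\SCI A_\Gamma'(m)$.

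Third, and this is where the argument diverges from Lemma \ref{crossed}: unlike the partial conjugation $c_1$ there, $\tau$ does \emph{not} commute with $t$, so the direct telescoping argument cannot be carried out. The target automorphism $\tau^m$ sends $v\mapsto v[u,w]^m$, whereas the commutator produced in Step 2 sends $v\mapsto v[u^m,w]$. Using the Hall-type identity
\[
[u^m,w] \;=\; [u,w]^{u^{m-1}}\,[u,w]^{u^{m-2}}\cdots [u,w]^{u}\,[u,w],
\]
the discrepancy $[u,w]^m\cdot[u^m,w]^{-1}$ lies in the second commutator subgroup of $\langle u,w\rangle\cong F_2$, and can be realized, at the level of automorphisms of $A_\Gamma$, as a product of commutators between $\tau$ and powers of $t$. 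By iterating the commutator identity of Step 1 (applied now to these ``higher'' $\tau$-type automorphisms whose commutator-value has greater depth), and invoking Lemma \ref{crossed} to absorb any partial-conjugation $m$-th powers that appear, each such contribution is pushed inside $\SCI A_\Gamma'(m)$.

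\textbf{Main obstacle.} The technical core of the argument lies in Step 3: because $\tau$ does not commute with $t=t_{v,u^{-1}}$, the telescoping trick from Lemma \ref{crossed} does not directly produce $\tau^m$, and one must instead manage an iterated commutator-calculus correction coming from the second commutator subgroup of $\langle u,w\rangle$. Bookkeeping these nested commutators and verifying that every contribution remains inside $\SCI A_\Gamma'(m)$ is where the bulk of the work sits.
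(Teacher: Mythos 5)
Your Step 3 is a genuine gap, and it is a self-inflicted one. You set up the problem so that the natural commutator you can produce, $[t^m,c^{-1}]$, realizes $v\mapsto v[u^m,w]$ while the target $\tau^m$ realizes $v\mapsto v[u,w]^m$, and you then propose to absorb the discrepancy by an iterated commutator-calculus correction in the derived series of $\langle u,w\rangle$. That correction is never actually carried out -- you only assert that each contribution ``can be realized'' and ``pushed inside'' $\SCI A_\Gamma'(m)$ -- and making it precise would require showing that the automorphisms $v\mapsto v\delta$ for the relevant elements $\delta$ of deeper commutator subgroups again lie in $\SCI A_\Gamma'(m)$, which is essentially a harder version of the lemma you are trying to prove. (There is also a computational slip in Step 1: with the stated conventions, $(t^{-1}ctc^{-1})(v)=vuwu^{-1}w^{-1}=v[u^{-1},w^{-1}]$, not $v[u,w]$; this is fixable by adjusting signs, but it signals that the identity was not actually checked.)

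The paper avoids the whole difficulty by choosing a different decomposition: with $c:=c_{u,v}$ (legitimate since one may assume $u\notin\lk(v)$, else $u\in\lk(v)\subset\st(w)$ forces $[u,w]=1$ and $\tau=\mathrm{id}$) and $t_{vw}$ the transvection $v\mapsto vw$, a direct computation gives
$$\tau=c^{-1}t_{vw}^{-1}ct_{vw}=c^{-1}c^{t_{vw}},$$
i.e.\ $\tau$ is a product of \emph{two elements of $\SCI A_\Gamma(m)$} (both are conjugates of a partial conjugation, hence lie in $\CI A_\Gamma\leq\SCI A_\Gamma(m)$, the latter being normal in $\Aut(A_\Gamma)$). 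Since $\SCI A_\Gamma(m)/\SCI A_\Gamma'(m)$ is abelian, one gets for free that
$$\tau^m\equiv c^{-m}\,(c^m)^{t_{vw}} \pmod{\SCI A_\Gamma'(m)},$$
and Lemma \ref{crossed} says precisely that $c^m\in\SCI A_\Gamma'(m)$; by normality the same holds for its $t_{vw}$-conjugate, so $\tau^m\in\SCI A_\Gamma'(m)$. No telescoping of $\tau$ against $t$ is needed, and the $[u^m,w]$ versus $[u,w]^m$ issue never arises. I recommend you replace Steps 1--3 by this argument: find a factorization of $\tau$ itself as a product of elements of $\SCI A_\Gamma(m)$ whose $m$-th powers are already controlled, rather than trying to exhibit $\tau^m$ directly as a single commutator.
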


\begin{proof}
 First, observe that if $u$ and $v$ are connected then $u \in \lk(v) \subset \st(w)$, which implies that the map $\tau$ is the identity. Thus we may assume that $u$ and $v$ are not connected. Note that the hypotheses imply that the transvection $t_{vw}$ is well-defined. Consider also the partial conjugation $c:= c_{u,v}$.
A quick calculation shows that $$\tau=c^{-1}t^{-1}_{vw}ct_{vw}=c^{-1}c^{t_{vw}}.$$ Therefore,  as both $c$ and $c^{t_{vw}}$ lie in
$\SCI A_\Gamma(m)$ we deduce that
  $$\tau^m\SCI A'_\Gamma(m) = (c^{-1} c^{t_{vw}})^m\SCI A'_\Gamma(m)  = c^{-m}(c^m)^{t_{vw}}\SCI A'_\Gamma(m) $$
  which  using Lemma \ref{crossed} gives the desired result.
\end{proof}

\begin{lemma}
Let $\Gamma$ be a graph with property (B2). Let $v, w \in V(\Gamma)$ with  $v \le w$ and consider the transvection $t_{vw}$.
Then, for any integer $m\geq 0$,
$$t_{vw}^{m^2}\in\SCI A'_\Gamma(m)$$
\label{m-transvection}
\end{lemma}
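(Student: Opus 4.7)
The plan is to realize $t_{vw}^{m^2}$ as a commutator of elements of $\SCI A_\Gamma(m)$, up to a Torelli ``correction'' which itself lies in $\SCI A'_\Gamma(m)$. By property (B2) we may pick an intermediate vertex $u\in V(\Gamma)$ with $v\le u\le w$ and $u\ne v,w$, so that both transvections $t_{uw}$ and $t_{vu}$ are defined. Their $m$-th powers $t_{uw}^m,\;t_{vu}^m$ belong to $\SCI A_\Gamma(m)$ (their images in $\SL(n,\Z_m)$ are trivial), hence
$$
[t_{uw}^m,t_{vu}^m]\ \in\ [\SCI A_\Gamma(m),\SCI A_\Gamma(m)]\ =\ \SCI A'_\Gamma(m).
$$
On the other hand, the elementary-matrix identity $[I+mE_{wu},\,I+mE_{uv}]=I+m^2E_{wv}$ in $\SL(n,\Z)$ shows that $[t_{uw}^m,t_{vu}^m]$ has the same image in $\SL(n,\Z)$ as $t_{vw}^{m^2}$, so there exists $\sigma\in\CI A_\Gamma$ with
$$
[t_{uw}^m,t_{vu}^m]\ =\ t_{vw}^{m^2}\cdot\sigma.
$$

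Applying both sides to each generator (using $t_{uw}^m(u)=uw^m$, etc.) shows that $\sigma$ fixes every generator of $A_\Gamma$ except $v$, and sends
$v\ \longmapsto\ v\cdot\alpha$ with $\alpha:=(uw^m)^{m}u^{-m}w^{-m^2}$. Note that $\alpha\in[\langle u,w\rangle,\langle u,w\rangle]$. If $u$ and $w$ are adjacent in $\Gamma$, then $u,w$ commute, so $(uw^m)^m=u^mw^{m^2}$, $\alpha=1$ and $\sigma=\mathrm{id}$; we conclude at once that $t_{vw}^{m^2}\in\SCI A'_\Gamma(m)$. It remains to handle the case in which $u,w$ are non-adjacent, so $\langle u,w\rangle\cong F_2$ and $\alpha\in[F_2,F_2]\setminus\{1\}$.

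Since $v\le u$ and $v\le w$, every element of $\lk(v)$ commutes with both $u$ and $w$, so the assignment $\beta\mapsto \xi_\beta$ defined by $\xi_\beta(v)=v\beta$ and $\xi_\beta(z)=z$ for $z\ne v$ yields an injective homomorphism $\xi:\langle u,w\rangle\to\Aut(A_\Gamma)$ with $\xi_u=t_{vu}$, $\xi_w=t_{vw}$, and $\xi_{[u,w]}=\tau_{u,v,w}$. Our $\sigma$ is precisely $\xi_\alpha$, so the plan is to show $\xi_\alpha\in\SCI A'_\Gamma(m)$. Hall--Petresco gives
$$
(uw^m)^m\ \equiv\ u^m w^{m^2}\,[u,w^m]^{-\binom{m}{2}}\ \pmod{\gamma_3(\langle u,w\rangle)},
$$
from which one computes
$$
\alpha\ \equiv\ [u,w]^{\,m^2(m+1)/2}\ =\ \bigl([u,w]^m\bigr)^{m(m+1)/2}\ \pmod{\gamma_3(\langle u,w\rangle)}.
$$
Applying $\xi$ and invoking Lemma~\ref{tau} shows that, modulo $\xi(\gamma_3(\langle u,w\rangle))$, the element $\sigma$ coincides with $(\tau_{u,v,w}^{m})^{m(m+1)/2}\in\SCI A'_\Gamma(m)$.

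The main obstacle is the contribution of $\xi(\gamma_3(\langle u,w\rangle))$, which requires iterating the same analysis down the lower central series of $\langle u,w\rangle$; because this series does not terminate in the free group, some care is needed. The key observation making the iteration work is that $\alpha=\phi_m\bigl((uw)^m u^{-m}w^{-m}\bigr)$, where $\phi_m\colon u\mapsto u,\ w\mapsto w^m$, and $\phi_m$ multiplies the contribution of every basic commutator to $\alpha$ by a positive power of $m$. Consequently, at every stage of the lower central series, each basic-commutator contribution appears with exponent divisible by $m$, and can be absorbed into $\SCI A'_\Gamma(m)$ using Lemma~\ref{tau}, the normality of $\SCI A'_\Gamma(m)$ in $\SAut^0(A_\Gamma)$, and a commutator-calculus argument expressing the higher $\tau$-type automorphisms $\xi_{[[u,w],\cdot]}$ as commutators of $\tau_{u,v,w}$ with $t_{vu}$ or $t_{vw}$. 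Combining all of this gives $\sigma\in\SCI A'_\Gamma(m)$, and therefore
$$t_{vw}^{m^2}\ =\ [t_{uw}^m,t_{vu}^m]\cdot\sigma^{-1}\ \in\ \SCI A'_\Gamma(m),$$ as required.
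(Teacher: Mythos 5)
Your setup is sound and runs parallel to the paper's: property (B2) produces the intermediate vertex $u$, the commutator of $t_{uw}^m$ and $t_{vu}^m$ (both in $\SCI A_\Gamma(m)$) lies in $\SCI A'_\Gamma(m)$, and it differs from $t_{vw}^{\pm m^2}$ by a correction $\sigma=\xi_\alpha$ supported on $v$, which vanishes when $u$ and $w$ commute. The gap is in the remaining case: your treatment of $\sigma$ rests on the claim that in the basic-commutator expansion of $\alpha=\phi_m\bigl((uw)^mu^{-m}w^{-m}\bigr)$ every exponent is divisible by $m$, and that claim is false. Take $m=2$ and work modulo $\gamma_4(\langle u,w\rangle)$: writing $(uw)^2u^{-2}w^{-2}\equiv[u,w]^3[[u,w],u]^f[[u,w],w]^g$, one has $\phi_2([u,w])=[u,w^2]\equiv[u,w]^2[[u,w],w]$, while $\phi_2$ multiplies the weight-three exponents $f,g$ by $2$ and $4$ respectively; hence the exponent of $[[u,w],w]$ in $\alpha$ is $3+4g$, which is odd whatever $g$ is. So the proposed descent already fails at its second step. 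Even if the divisibility held, you would still need an analogue of Lemma~\ref{tau} for the higher automorphisms $\xi_c$ (you only assert it), and an argument that the descent through the (infinite) lower central series of $\langle u,w\rangle$ actually terminates. As written, the final step does not prove $\sigma\in\SCI A'_\Gamma(m)$.

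The paper avoids all of this with one exact identity. Since $v\le u$ and $u\notin\lk(v)$, the singleton $\{v\}$ is a connected component of $\Gamma-\st(u)$, so the partial conjugation $c:=c_{u,v}\colon v\mapsto u^{-1}vu$ is defined and lies in $\CI A_\Gamma\subseteq\SCI A_\Gamma(m)$; a direct check on generators gives
$$[t^m_{vu},t^m_{uw}]=c^{-m}\,(c\,t_{vw}^{-m})^m .$$
Reducing modulo $\SCI A'_\Gamma(m)$, the left-hand side is trivial and the right-hand side becomes $c^{-m}c^{m}t_{vw}^{-m^2}=t_{vw}^{-m^2}$, which is the lemma. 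The idea you are missing is that the correction term should be absorbed by \emph{conjugating} $v$ by $u$ rather than right-multiplying $v$ by a word in $\langle u,w\rangle$: this turns the problematic free-group word $\alpha$ into an exact telescoping product and makes the commutator calculus unnecessary.
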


\begin{proof}
By (B2) there exists $u\in V(\Gamma)$ such that $u\neq v,w$ and $v\le u\le w$. There are two cases to consider, depending on whether $u$ and $v$ are connected or not. 
Suppose first that $u$ and $v$ are connected, in which case $u$ and $w$ are also connected since $v\le w$. Therefore, $u$ commutes with both $v$ and $w$ and so
$$[t_{vu}^m,t_{uw}^m]=t_{vw}^{-m^2},$$ which implies $t_{vw}^{m^2}\in\SCI A_\Gamma'(m)$. 

Suppose now that $u$ and $v$ are not connected, and consider  $c:= c_{u,v}$ 
 It is immediate to check that 
$$[t^m_{vu},t^m_{uw}]=c^{-m}(ct_{vw}^{-m})\buildrel m\over\ldots(ct_{vw}^{-m})$$
which, as both $c$ and $t_{vw}^{-m}$ lie in $\SCI A_\Gamma(m)$, yields
$$\SCI A'_\Gamma(m)=[t^m_{vu},t^m_{uw}]\SCI A_\Gamma '(m)=c^{-m}c^mt_{vw}^{-m^2}\SCI A_\Gamma'(m)=t_{vw}^{-m^2}\SCI A'_\Gamma(m),$$ as we wanted to show. 
\end{proof}

We can now prove Proposition \ref{m-torelliab}:

\begin{proof}[Proof of Proposition \ref{m-torelliab}]
Let $m\ge 0$. Consider the decomposition $$A_\Gamma=F_{n_1}\times\ldots\times F_{n_k}\times\Z^a$$ whose existence is guaranteed by Lemma \ref{propertyB}.  As obtained in the proof of that lemma, $n_1,\ldots,n_k$ and $a$ are precisely the cardinalities of the equivalence classes with respect to the relation $\sim$; moreover, we have $a \ne 2$ and $n_i \ne 2$ for all $i$. Now, Lemma \ref{m-torelligen} implies then that $\SCI A_\Gamma(m)$ is normally generated by partial conjugations, $\tau$-maps, and $m$-powers of transvections in $\Aut(A_\Gamma)$. By Lemmas \ref{crossed}, \ref{tau}, and \ref{m-transvection}, we deduce that $m$-th powers of these automorphisms lie in $\SCI A'_\Gamma(m)$. Observe that $\SCI A'_\Gamma(m)$ is normal in $\SCI A_\Gamma$, and  so we have an infinite family of generators all whose $m$-th powers lie in $\SCI A'_\Gamma(m)$. But as $\SCI A_\Gamma(m)/ \SCI A'_\Gamma(m)$ is abelian, this means that the order of any of its elements divides $m$. On the other hand  $\SCI A_\Gamma(m)$ is finitely generated, and thus so is $\SCI A_\Gamma(m)/ \SCI A'_\Gamma(m)$. Hence
 the result follows.
\end{proof}

Armed with the above, we are now in a position to prove Theorem \ref{thma}:

\begin{proof}[Proof of Theorem \ref{thma}] Let $\Gamma$ be a simplicial graph with property (B), and suppose $G\leq \Aut(A_\Gamma)$ is a finite-index subgroup containing the Torelli subgroup $\CI A_\Gamma$, which in turn implies that $\CI A_\Gamma ' \leq G'$. Then there is some $G_1\leq G$, normal in $\Aut(A_\Gamma)$ and which contains $\CI A_\Gamma$, such that the index $[\Aut(A_\Gamma) : G_1]$ is also finite.

Let $m= [\Aut(A_\Gamma) : G_1]$, and observe that for every $\alpha \in \Aut(A_\Gamma)$ we have $\alpha^m \in G_1$. By Lemma \ref{m-torelligen}, $\SCI A_\Gamma(m)$ is normally generated by $\CI A_\Gamma$ and $m$-powers of transvections in $\Aut(A_\Gamma)$. As a consequence, $\SCI A_\Gamma(m)\leq G_1\leq G$ and thus $\SCI A'_\Gamma(m)\leq G'$ also. Since $\SCI A_\Gamma(m)$ and $G$ both have finite index in $\Aut(A_\Gamma)$ and $[\SCI A_\Gamma(m):  \SCI A'_\Gamma(m)]$ is finite by  Proposition \ref{m-torelliab}, we deduce that $[G:G']$ is also finite, which implies the result. \end{proof}


\section{Kazhdan's property T}
\label{s:indic}
In this section we will prove Theorem \ref{prop1}, whose statement we now recall for the reader's convenience.

\begin{named}{Theorem \ref{prop1}} Let $\Gamma$ be a simplicial graph. Then the following conditions are equivalent

\begin{itemize}
\item[i)] $\Gamma$ has property (B2),

\item[ii)] $\Aut(A_\Gamma)/\CI A_\Gamma$  has Kazhdan's property (T),

\item[iii)] $\Aut(A_\Gamma)/\CI A_\Gamma$ is not virtually indicable,

\item[iv)] $H^1(\SAut^0(A_\Gamma)/\CI A_\Gamma,\Z) = 0.$
\end{itemize}
\end{named}
Recall from the introduction that $\SAut^0(A_\Gamma)$ denotes the finite-index subgroup of $\Aut(A_\Gamma)$ generated by transvections and partial conjugations. By a result of Wade (see Corollary 4.10 of \cite{Wade}), the subgroup $\SAut^0(A_\Gamma)$ contains $\CI A_\Gamma$. As we did before, denote by $\mathcal H$ the image of $\SAut^0(A_\Gamma)$ under the  natural homomorphism $\Aut(A_\Gamma) \to \GL(n,\Z)$, where $n$ is the number of vertices of $\Gamma$.

The most involved part in the proof of Theorem \ref{prop1} is to show that the quotient group $\mathcal H$ has Kazhdan's property (T) whenever the graph $\Gamma$ satisfies property (B2). We will do this using a certain inductive argument on the number of equivalence classes of vertices of $\Gamma$ with respect to the equivalence relation $\sim$. To this end, we need to gain a better understanding of the structure of the group $\mathcal H$, and we now proceed to do so.

For the time being, suppose $\Gamma$ is an arbitrary graph with $n$ vertices.  Assume as in the previous section that the classes $\{[v_1],\ldots,[v_k]\}$ are ordered so that $v_j\leq v_i$ implies $j\leq i$. Denote by $n_i$ the cardinality of the class $[v_i]$, and let $V_1,\ldots,V_k$ be the  partition of the set $\{1,\ldots,n\}$ given by  $V_1=\{1,\ldots,n_1\}$, $V_2=\{n_1+1,\ldots,n_1+n_2\}$ and so on.  Consider the directed graph $\Lambda$ with vertices labelled by the $V_i$ and an arrow (i.e. a directed edge) from $V_j$ to $V_i$ whenever $v_j\leq v_i$; in particular, there is an arrow from $V_i$ to itself. We deduce that 
$\mathcal{H}$ is generated by the set $$\{T_{st}\in\GL(n,\Z)\mid s\in V_i,t\in V_j\text{ and there is an arrow }V_j\to V_i\text{ in }\Lambda \}$$
We recall that the elements in $\mathcal{H}$ are block lower triangular matrices and that the diagonal blocks have sizes $n_1,\ldots,n_k$. 

In fact, we may work in the following slightly more general setting. Assume $\{V_1,\ldots,V_k\}$ is a  partition of $\{1,\ldots,n\}$, and let $\Lambda$ be any directed graph so that if there is an arrow $V_j\to V_i$ we have $j\leq i$. We assume also that the graph is transitive, i.e., that if there are arrows $V_j\to V_i$ and $V_i\to V_l$, then there is also an arrow $V_j\to V_l$. We define $\mathcal{H}_\Lambda$ as 
the group generated by the set $$\{T_{st}\in\SL(n,\Z)\mid s\in V_i,t\in V_j\text{ and there is an arrow }V_j\to V_i\text{ in }\Lambda\}.$$
In other words, we work with groups generated by transvections that look like those coming from a right-angled Artin group, without worrying about whether or not this is the case -- doing so will simplify the inductive argument below. 
We stress that all  we said before about the block structure of the matrices in $\mathcal{H}$ remains true for $\mathcal{H}_\Lambda$ (the transitivity assumption is needed for this to be true).

There are two normal subgroups of $\mathcal{H}_\Lambda$ that will be relevant below. Namely,  the subgroup $N_1<\mathcal{H}_\Lambda$, generated by the set 
$$\{T_{st}\in\SL(n,\Z)\mid s\in V_i,t\in V_1\text{ and there is an arrow }V_1\to V_i\text{ in }\Lambda\},$$
and the subgroup $N_2<\mathcal{H}_\Lambda$, generated by the set
$$\{T_{st}\in\SL(n,\Z)\mid s\in V_k,t\in V_j\text{ and there is an arrow }V_j\to V_k\text{ in }\Lambda\}.$$
The block structure of the elements of $\mathcal{H}_\Lambda$ implies that the groups $N_1$ and $N_2$ are also of the type we are considering. Indeed, $N_1=\mathcal{H}_{\Lambda_1}$, where $\Lambda_1$ is the graph obtained from $\Lambda$ by removing all the arrows except those starting in $V_1$; analogously, $N_2=\mathcal{H}_{\Lambda_2}$, with $\Lambda_2$  the graph obtained from $\Lambda$ by removing all the arrows except those ending in $V_k$. 

%

Moreover, using the same reasoning we obtain that 
$$\mathcal{H}_{\Lambda}/N_1\cong\mathcal{H}_{\bar\Lambda_1}\leq\SL(n-n_1,\Z)$$
where $\bar\Lambda_1$ is the graph obtained from $\Lambda$ by removing the vertex $V_1$, and 
$$\mathcal{H}_{\Lambda}/N_2\cong\mathcal{H}_{\bar\Lambda_2}\leq\SL(n-n_k,\Z)$$
where $\bar\Lambda_2$ is the graph obtained from $\Lambda$ by removing the vertex $V_k$.


We now prove the base case for the inductive argument that we will need in the proof of Theorem \ref{prop1}:

\begin{lemma}\label{inductivestep} Assume that one of the following holds:
\begin{itemize}
\item[i)] $\Lambda$ has one arrow $V_1\to V_1$, $n_1>2$ and  there are only arrows starting at $V_1$.

\item[ii)] $\Lambda$ has one arrow $V_k\to V_k$, $n_k>2$ and there are only arrows ending at $V_k$.
\end{itemize}
 Then $\mathcal{H}_\Lambda$ has Kazhdan's property (T).
\label{lemT}
\end{lemma}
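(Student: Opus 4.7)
The plan is to exploit the semidirect product structure of $\mathcal{H}_\Lambda$ under each of the hypotheses, and then to invoke the standard criterion: if $N \triangleleft G$ is a normal subgroup such that the pair $(G, N)$ has the relative property (T) and $G/N$ has property (T), then $G$ itself has property (T). The two cases are handled by entirely parallel arguments, so I will describe case (i) in detail and merely indicate the modifications for case (ii).

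Under the hypothesis of (i), every arrow of $\Lambda$ starts at $V_1$ and the only loop is $V_1 \to V_1$. Hence every element of $\mathcal{H}_\Lambda$ has the block form
$$\begin{pmatrix} A & 0 \\ B & I_{n - n_1} \end{pmatrix},$$
with $A \in \SL(n_1, \Z)$ (from the self-loop at $V_1$) and $B$ an $(n - n_1) \times n_1$ integer matrix whose $V_i$-indexed sub-block is unconstrained when there is an arrow $V_1 \to V_i$ and zero otherwise. A direct computation of the multiplication rule shows that
$$\mathcal{H}_\Lambda \cong M \rtimes \SL(n_1, \Z),$$
where $M$ is the additive group of admissible matrices $B$, acted on by $\SL(n_1, \Z)$ as a finite direct sum of copies of (the contragredient of) the standard representation on $\Z^{n_1}$.

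Two classical ingredients then complete the proof. First, Kazhdan's theorem gives that $\SL(n_1, \Z)$ has property (T), since $n_1 > 2$. Second, the pair $(\SL(n_1, \Z) \ltimes \Z^{n_1}, \Z^{n_1})$ has the relative property (T), and this property is inherited by finite direct sums of the standard representation; hence $(\mathcal{H}_\Lambda, M)$ has the relative property (T), and the extension criterion from the first paragraph yields property (T) for $\mathcal{H}_\Lambda$. The main technical point is the passage from a single copy of $\Z^{n_1}$ to a direct sum, which can be handled either via the Pontryagin-dual characterization of relative (T) for abelian normal subgroups (no $\SL(n_1,\Z)$-invariant probability measure on the complement of the identity in the dual) or by a short induction on the number of summands; everything else is essentially formal. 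For case (ii), the analogous block decomposition produces $\mathcal{H}_\Lambda \cong M' \rtimes \SL(n_k, \Z)$ with $M'$ a direct sum of copies of the standard representation of $\SL(n_k, \Z)$ on $\Z^{n_k}$, and the same two-step argument applies since $n_k > 2$.
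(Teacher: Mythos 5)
Your proof is correct and follows essentially the same route as the paper: both hinge on identifying $\mathcal{H}_\Lambda$ with the semidirect product $\SL(n_1,\Z)\ltimes \mathrm{M}_{(n-n_1)\times n_1}(\Z)$ (resp.\ $\SL(n_k,\Z)\ltimes\mathrm{M}_{n_k\times m}(\Z)$). The only difference is that the paper then simply cites Proposition 1.1 of de Cornulier for property (T) of this semidirect product, whereas you unpack that citation via relative property (T) for the pair $(\SL(n,\Z)\ltimes\Z^n,\Z^n)$, its stability under finite direct sums, and the standard extension criterion --- which is exactly how de Cornulier's proposition is proved.
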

\begin{proof} Assume ii) holds. Note that
$$\mathcal{H}_\Lambda=\SL_{n_k}(\Z)\ltimes\mathrm{M}_{n_k\times m}(\Z)$$
where $m=\sum\{n_j\mid j\neq k,\text{ and there is an arrow }V_j\to V_k\}$ and $\SL_{n_k}(\Z)$ acts on $\mathrm{M}_{n_k\times m}(\Z)$ by right matrix multiplication. This group has property (T) by Proposition  1.1 of  \cite{Cornulier}.
The proof for i) is analogous.
\end{proof}

We now furnish the inductive argument that will be used in the proof of Theorem \ref{prop1}:

\begin{proposition}\label{propertyT} Let $\Lambda$ be a graph as above. Assume  that $n_i\neq 2$ for $i=1,\ldots,k$ and that, whenever $n_j=n_i=1$, if  there is an arrow $V_j\to V_i$ with $V_j\neq V_i$  then there is some $r\neq i,j$ and arrows $V_j\to V_r$, $V_r\to V_i$.
Then $\mathcal{H}_\Lambda$  has Kazhdan's property (T).
\end{proposition}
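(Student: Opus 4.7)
The plan is to induct on $k$. The base case $k=1$ is immediate: $\mathcal{H}_\Lambda$ is either trivial (if $n_1=1$) or $\SL_{n_1}(\Z)$ with $n_1\geq 3$, so Kazhdan's theorem applies.

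For the inductive step I would first observe that the hypothesis of the proposition is preserved by deleting either endpoint $V_1$ or $V_k$: for any arrow $V_j\to V_i$ between singletons with $j>1$ and $i<k$, any intermediate $V_r$ promised by the hypothesis satisfies $j\leq r\leq i$ and hence differs from both $V_1$ and $V_k$. Consequently $\bar\Lambda_1$ and $\bar\Lambda_2$ satisfy the hypothesis, and by induction the quotients $\mathcal{H}_{\bar\Lambda_1}\cong\mathcal{H}_\Lambda/N_1$ and $\mathcal{H}_{\bar\Lambda_2}\cong\mathcal{H}_\Lambda/N_2$ have property (T). If $n_k\geq 3$ (respectively $n_1\geq 3$), Lemma \ref{inductivestep} gives property (T) for $N_2$ (respectively $N_1$), and since property (T) is preserved under extensions by (T) we are done. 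When $n_1=n_k=1$ but one of the endpoints is not involved in any cross-block arrow, that vertex contributes no generator to $\mathcal{H}_\Lambda$ and can simply be deleted to reduce $k$.

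The main obstacle is the remaining case $n_1=n_k=1$ with both endpoints involved. Then $N_2\cong\Z^m$ is merely abelian, so the extension-of-(T) argument breaks. Instead, the plan is to show that the pair $(\mathcal{H}_\Lambda, N_2)$ has \emph{relative} property (T); combined with (T) of $\mathcal{H}_{\bar\Lambda_2}$, this still yields (T) of $\mathcal{H}_\Lambda$. Decompose $N_2=\bigoplus_j \Z^{n_j}$, summed over $V_j$ with $V_j\to V_k$. For each summand with $n_j\geq 3$, the subgroup $\SL_{n_j}(\Z)\ltimes\Z^{n_j}$ of $\mathcal{H}_\Lambda$ has property (T) by Proposition 1.1 of \cite{Cornulier}, yielding relative (T) of that summand. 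For a summand with $n_j=1$, the hypothesis together with an iterative descent on the index (a chain of singleton-to-singleton arrows cannot be sustained indefinitely once indices are forced strictly below the source) produces an intermediate $V_r$ with $n_r\geq 3$ and arrows $V_j\to V_r\to V_k$. The Steinberg relation $T_{n,j'}=[T_{n,t'},T_{t',j'}]$ for $t'\in V_r$ then exhibits the generator of this $\Z$-summand inside a copy of $\SL_{n_r}(\Z)\ltimes\mathrm{Heis}_{2n_r+1}(\Z)$, for which relative (T) of the Heisenberg is known. Summing relative (T) over $j$ would give relative (T) of $(\mathcal{H}_\Lambda, N_2)$, closing the induction.
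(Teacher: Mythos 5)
Your proposal is correct in outline and shares the paper's skeleton -- induction removing an endpoint block, with Lemma \ref{lemT} plus the extension property of (T) (Proposition 1.7.6 of \cite{BHV}) disposing of the cases $n_1\geq 3$ or $n_k\geq 3$ -- but it diverges genuinely in the hard case $n_1=n_k=1$. The paper never establishes relative property (T) for $(\mathcal{H}_\Lambda,N_2)$ directly. Instead it sets $C=N_1\cap N_2$ (central, at most $\Z$), writes $\mathcal{H}_\Lambda/C\cong \mathcal{H}_\Lambda/N_1N_2\ltimes(N_1/C\oplus N_2/C)$, extracts relative (T) for the pairs $(\mathcal{H}_\Lambda/N_i,\,N_1N_2/N_i)$ from the (T) of the two quotients $\mathcal{H}_\Lambda/N_i$ (which hold by induction), combines them via Lemma 5.2 of \cite{Fernos} to get relative (T) of $(\mathcal{H}_\Lambda/C,\,N_1/C\oplus N_2/C)$, and finally climbs back up the central extension using Theorem 1.7.11 of \cite{BHV} -- which is why the paper opens its proof by showing $\mathcal{H}_\Lambda$ is perfect. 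Your route trades all of that for a direct, block-by-block proof of relative (T) for $(\mathcal{H}_\Lambda,N_2)$; it only ever uses one of the two normal subgroups, needs no central quotient and no perfectness, but it pushes the difficulty into explicit Heisenberg-type computations that the paper's softer argument avoids.

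Two steps in your sketch are asserted rather than proved, and both need real content. First, ``relative (T) of the Heisenberg is known'': what you actually need is that $(\mathcal{H}_\Lambda,\langle T_{nj'}\rangle)$ has relative (T), and this does not follow from relative (T) of the two Lagrangian copies of $\Z^{n_r}$ separately. It is true, and one clean way to see it is to apply Burger's criterion (or the machinery of \cite{Fernos}) to the abelian normal subgroup $\Z^{n_r+1}$ generated by the row block $\{T_{nt}:t\in V_r\}$ together with $T_{nj'}$, normalized by $\SL_{n_r}(\Z)\ltimes\{T_{tj'}:t\in V_r\}$: the dual action admits no invariant probability measure, since on the hyperplane at infinity one sees the standard $\SL_{n_r}$ action on $\mathbb{P}^{n_r-1}$ and off it an affine action containing all integer translations. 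Also note your descent producing $V_r$ with $n_r\geq 3$ silently uses the standing transitivity assumption on $\Lambda$ to keep the arrow $V_r\to V_k$ along the chain; with that it does terminate, for the reason you give. Second, ``summing relative (T) over $j$'': combining relative (T) of $(\mathcal{H}_\Lambda,A_j)$ for finitely many subgroups $A_j$ into relative (T) of $(\mathcal{H}_\Lambda,\prod_j A_j)$ requires the quantitative form of relative property (T) (almost-invariant vectors are uniformly close to $A_j$-invariant ones), not just the qualitative definition; this is standard but is exactly the kind of bookkeeping that the paper's single application of Fern\'os's lemma to the semidirect product decomposition is designed to package. With those two points supplied, your argument closes the induction.
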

\begin{proof} We claim first that the group $\mathcal{H}_\Lambda$ is perfect, i.e. it has trivial abelianization.  Let $T_{st}$, $s\neq t$, be any element in the generating set used to define $\mathcal{H}_\Lambda$. By definition there are $V_i$ and $V_j$ with $s\in V_i$ and $t\in V_j$ such that there is an arrow $V_j\to V_i$ in $\Lambda$. The claim will follow if we show that there is always some $l\ne s,t$ such that $T_{sl}, T_{lt} \in \mathcal{H}_\Lambda$, for if this were the case we would have
$$T_{st}=[T_{sl},T_{lt}]\in\mathcal{H}_\Lambda'.$$
If $n_i\geq 3$, then we may take $l\in V_i$ with $l\neq s$ and then we have $T_{sl}$,$T_{lt}\in \mathcal{H}_\Lambda$. The same thing happens if $n_j\geq 3$. Thus assume $n_i=n_j=1$. By hypothesis, there is some $r\neq i,j$ and arrows $V_j\to V_r$, $V_r\to V_i$ so we may choose $l\in V_r$. Hence the claim follows. (We remark that this proof also implies that the real version of $\mathcal{H}_\Lambda$ is perfect,
where all the $\SL(n_i,\Z)$-blocks are turned into $\SL(n_i,\R)$ blocks and all the $\mathrm{M}_{n_i\times n_j}(\Z)$-blocks are turned into  $\mathrm{M}_{n_i\times n_j}(\R)$-blocks.)

Next, we now proceed by induction on $n$. The argument boils down to removing either the initial or terminal vertex of $\Lambda$, thus passing to groups of the same type that are contained in $\SL(m,\Z)$ for $m<n$. 
Observe that   the  graph obtained by removing such a vertex still satisfies the same properties on the arrows. 

First, observe that the result is trivial if $\Gamma$ has only one vertex, so assume this is not the case. 

Let, with the same notation used above,
$$N_2=\mathcal{H}_{\Lambda_2}$$
where $\Lambda_2$ is the graph obtained from $\Lambda$ by removing all the arrows except of those ending in $V_k$. 
Assume first that  $n_k\geq 3$, in which case  Lemma \ref{lemT} implies that $N_2$ has property (T). On the other hand, 
$$\mathcal{H}_{\Lambda}/N_2\cong\mathcal{H}_{\bar\Lambda_2}\leq\SL(n-n_k,\Z)$$
where $\bar\Lambda_2$ is the graph obtained from $\Lambda$ by removing the vertex $V_k$. Observe that the hypotheses on $\Lambda$ also holds for $\bar\Lambda_2$ so by induction we may assume that  $\mathcal{H}_\Lambda/N_2$  has property (T). At this point we deduce from  Proposition 1.7.6  of \cite{BHV} that $\mathcal{H}_\Lambda$ has property (T) also. 

Proceeding in an analogous manner with $N_1$ instead of $N_2$ gives the desired result whenever $n_1 \ge 3$. Therefore we may assume that $n_1=n_k=1$. 
At this point, let $C$ be either the trivial group, in the case when there is no arrow $V_1\to V_k$, or the subgroup generated by $T_{n_1}$ otherwise. Observe that in both cases $C$ is central in $\mathcal{H}_\Gamma$ (since the upper and the lower diagonal block in any matrix in $\mathcal{H}_\Lambda$ are each a single 1).  Moreover, $C=N_1\cap N_2$. 


As before, $\mathcal{H}_\Lambda/N_1$ and $\mathcal{H}_\Lambda/N_2$  have property (T). 
Observe that
$$\mathcal{H}_\Lambda/C\cong\mathcal{H}_\Lambda/N_1N_2\ltimes(N_1/C\oplus N_2/C)$$
where we regard $\mathcal{H}_\Lambda/N_1N_2$ as matrices in $\SL(n-2,\Z)$ acting via left multiplication on $N_1/C$ and via right multiplication on $N_2/C$. 
Now, as $\mathcal{H}_\Lambda/N_1$ has property (T), we deduce from Remark 1.7.7 in \cite{BHV} that both $\mathcal{H}_\Lambda/N_1N_2$ and the pair $(\mathcal{H}_\Lambda/N_1,N_1N_2/N_1)$ have property (T). Similarly, since  $\mathcal{H}_\Lambda/N_2$ has property (T), we deduce from the aforementioned remark in \cite{BHV} that the same is true for the pair $(\mathcal{H}_\Lambda/N_2,N_1N_2/N_2)$. Moreover, we have
$$\mathcal{H}_\Lambda/N_1=\mathcal{H}_\Lambda/N_1N_2\ltimes N_2/C,\text{ and}$$
$$\mathcal{H}_\Lambda/N_2=\mathcal{H}_\Lambda/N_1N_2\ltimes N_1/C.$$
At this point Lemma 5.2 of \cite{Fernos} implies that the pair  $(\mathcal{H}_\Lambda/C,N_1/C\oplus N_2C)$ also has property (T), and Remark 1.7.7 in \cite{BHV} implies that the same is true for  $\mathcal{H}_\Lambda/C$. In the case when $C=1$ this finishes the proof; otherwise, Theorem 1.7.11 of \cite{BHV} and  the first part of the proof yield the desired result.
\end{proof}

\begin{proof}[Proof of Theorem \ref{prop1}] We first prove that (i) implies (iv). Seeking a contradiction, assume that iv) holds but that $\Gamma$ does not have property (B2); in other words, there are vertices
$v\neq w$  with $v\leq w$ such that there is no vertex $u\ne v$ with $v\leq u\leq w$. In particular, $[v]=\{v\}$ and $[w]=\{w\}$, where $[\cdot]$ denotes the equivalence class with respect to the equivalence relation $\sim$. Now, it follows from Wade's presentation of the group $\mathcal H$ (see Proposition 4.11 of \cite{Wade}), that whenever the image $T_{vw}$ of the transvection $t_{vw}$   in $\mathcal H$  appears in some relator of $\mathcal H$, it does so with exponent sum 0.  Therefore the homomorphism
$$\pi:\mathcal{H}\to\Z$$
given by $\pi(T_{vw})=1$ (with additive notation in $\Z$) and $\pi(T_{rs})=0$ for any other generator $T_{rs}$ is well-defined. In particular, $H^1(\SAut^0(A_\Gamma)/\CI A_\Gamma,\Z)\neq 0$ which contradicts iv). 

Obviously ii) implies iii) and iii) implies iv) so it suffices to show that (ii) implies (iii), i.e. that if $\Gamma$ has property (B2) then 
$\Aut(A_\Gamma)/\CI A_\Gamma$ has property (T). As  property (T) is invariant under finite index extensions, (see Theorem 1.7.1 of \cite{BHV}), it is enough to show that $\mathcal{H}$ has property (T).
As $\Gamma$ has property (B2),  Lemma \ref{propertyB} implies that there is no class $[v]$ with exactly two elements. Therefore, if $\Gamma$ has (B2) then the associated graph $\Lambda$ satisfies the hypotheses of Proposition \ref{propertyT}, and hence $\mathcal{H}=\mathcal{H}_\Lambda$ has property (T).
\end{proof}

\section{Virtual indicability}

We now proceed to give a proof of Theorem \ref{thm-indic2}. Recall from the introduction that  $\Aut^1 A_\Gamma$ is the finite-index subgroup of $\Aut(A_\Gamma)$  generated by   transvections, partial conjugations and thin inversions. Day's presentation of $\Aut(A_\Gamma)$, described in Theorem \ref{thm-day} above, implies that $\Aut^1( A_\Gamma)$ contains precisely those graphic automorphisms that preserve each equivalence class with respect to the equivalence relation $\sim$ and fix thin vertices.  We denote  the subgroup of such graphic automorphisms by $\operatorname{Sym}^1(A_\Gamma)$.

In order to prove Theorem \ref{thm-indic2} we will need to work with a presentation of  $\Aut^1(A_\Gamma)$. To this end, a modification of Day's arguments in \cite{Day} implies the following: 

\begin{proposition}[\cite{Day}] The group $\Aut^1(A_\Gamma)$ has a finite presentation with generators the set of type (2) Whitehead automorphisms and $\operatorname{Sym}^1(A_\Gamma)$, and relators (R1), (R2), (R3), (R4), (R5), (R9), (R10) above together with
\begin{enumerate}
\item[(R6)'] $\sigma (A,a) \sigma^{-1}= (\sigma(A), \sigma(a))$, for every $\sigma\in\operatorname{Sym}^1(A_\Gamma)$.
\item[(R7)'] All the relations among automorphisms in $\operatorname{Sym}^1(A_\Gamma)$.
\end{enumerate}
\label{prop:day}
\end{proposition}


\begin{proof}[Sketch proof of Proposition \ref{prop:day}]
First, it follows directly from  the definition that $\Aut^1(\Gamma)$ is generated by all the type (2) Whitehead automorphisms, and every thin inversion. Thanks to relator (R5), we may add the elements of $\operatorname{Sym}^1(A_\Gamma)$ to this list of generators.

Let $R^1$ be the list (R1)-(R10) of Day's relators, except that (R6) and (R7) are substituted by (R6)' and (R7)', respectively. Observe that every relation in $R^1$ is indeed a relation in $\Aut^1(A_\Gamma)$. Therefore, it remains to justify why these form a complete set of relations in $\Aut^1(A_\Gamma)$. 

By Theorem A.1 of \cite{Day}, every automorphism $\alpha \in \Aut(A_\Gamma)$ may be written as a product $\alpha= \beta \gamma$, where $\beta$ lies in the subgroup of  $\Aut(A_\Gamma)$ generated by {\em short-range} automorphisms and $\gamma$ is in the subgroup generated by {\em long-range} automorphisms.  Here, we say that $\delta \in \Aut(A_\Gamma)$ is {\em long-range} if either it is a type (1) Whitehead automorphism, or it is a type (2) automorphism  specified by a subset $(A,v)$ such that $\delta$ fixes all the elements adjacent to $v$ in $\Gamma$. Similarly, we say that  $\delta \in A_\Gamma$ is {\em short-range} if it is a type (2) automorphism specified by a subset $(A, v)$ and $\delta$ fixes all the elements of $\Gamma$ not adjacent to $v$. Following Day, we denote by $\Omega_l$ (resp. $\Omega_s$) the set of all long-range (resp. short-range) automorphisms.

Consider now $\alpha \in \Aut^1(A_\Gamma)$, and observe that all short-range automorphisms are in $\Aut^1(A_\Gamma)$. The proof of the splitting in Theorem A.1 of \cite{Day} above is based in the so called {\em sorting substitutions} in \cite{Day} Definition 3.2. Of these, only substitution (3.1) involves an element possibly not in $\Aut^1(A_\Gamma)$ and this element is just moved along, meaning that if our initial string consists  solely of elements in $\Aut^1(A_\Gamma)$, then so does the final  string. Moreover, observe that the relators needed for these moves all lie in $R^1$ (an explicit list of the relators needed, case by case, can be found in \cite[Lemma 3.4]{Day}). All this implies that up to conjugates of relators in $R^1$, we may write $\alpha= \beta \gamma$ with  $\beta$ in the subgroup of  $\Aut^1(A_\Gamma)$ generated by $\Omega_s$ and $\gamma$  in the subgroup generated by  $\Omega_l^1=\Omega_l \cap \Aut^1(A_\Gamma)$. 

By Proposition 5.5 of \cite{Day}, the subgroup of $\Aut^1(A_\Gamma)$ generated by $\Omega_s$ has a presentation whose every generator is a short-range automorphism or an element of $\operatorname{Sym}^1(A_\Gamma)$, and whose every relator lies in $R^1$. Indeed, in the proof of  \cite[Proposition 5.5]{Day}, the generators that we need to add to $\Omega_s$ to get the desired presentation are precisely the elements of the form $\sigma_{ab}$ of (R5), which belong to $\operatorname{Sym}^1(A_\Gamma)$.

In addition, the subgroup $\Aut^1(A_\Gamma)$ generated by $\Omega^1_l $ has a presentation whose every relator is in $R^1$. To see that this is indeed the case, first recall from Proposition 5.4 of \cite{Day} that the subgroup of $\Aut(A_\Gamma)$ generated by $\Omega_l$ admits a presentation in which every relation (also in the list (R1)-(R10) of Theorem \ref{thm-day}) is written in terms of $\Omega_l$. In order to prove this, Day uses a certain inductive argument called the {\em peak reduction algorithm}. But by Remark 3.22 of \cite{Day}, every element of $\Aut^1(A_\Gamma)$ may be peak-reduced using elements of $\Aut^1(A_\Gamma)$ {\em only}. Indeed, the only subcase of  \cite[Remark 3.22]{Day} which is problematic in this setting is the use of subcase (3c) of \cite[Lemma 1.18]{Day}. But the relator used in that subcase is precisely (R5) where the type (1) Whitehead automorphism is $\sigma_{ab}$, which belongs to $\operatorname{Sym}^1(A_\Gamma)$.

Moreover, the process of peak reduction needs relators in $R^1$ only; this is a consequence of the fact, observed already in Remark 3.22 of \cite{Day}, that  type (1) Whitehead automorphisms are only moved around when lowering peaks, and if they lie in $\Omega_l^1$ then the needed relator is precisely (R5), where the type (1) Whitehead automorphism is $\sigma_{ab}\in\operatorname{Sym}^1(A_\Gamma)$.
\end{proof}

\begin{remark} Essentially the same proof above can be used to show that the group $\Aut^0(A_\Gamma)$ generated by transvections, partial conjugations and all the inversions admits an analogous finite presentation as the previous one; however, here one needs to include all those graphic automorphisms $\operatorname{Sym}^0(A_\Gamma)$ that preserve the equivalence classes for $\sim$.
\end{remark}

We are now in a position to prove Theorem \ref{thm-indic2}, whose statement we now recall for the reader's convenience:

\begin{named}{Theorem \ref{thm-indic2}}
Let $\Gamma$ be a simplicial graph. Suppose there exists a vertex $w\in V(\Gamma)$ such that
there is no $v \in V(\Gamma)$ with $v\le w$. Then $\Aut^1(A_\Gamma)$  surjects onto $\Z$. If moreover
$\Gamma - \st(w)$ is not connected, then also $\Out^1(A_\Gamma)$ surjects onto $\Z$.
\end{named}

\begin{proof}[Proof of Theorem \ref{thm-indic2}] We are going to construct an explicit surjective homomorphism $\Aut^1(A_\Gamma) \to \Z$. 
Before proceeding to do so, observe that the fact that there is no $v \in V(\Gamma)$ with $v\le w$ implies $[w]=\{w\}$  and  $\Gamma\neq\st(w)$. In particular, $w$ is thin. Let $Y$ be a connected component of $ \Gamma-\st(w)$ and consider the partial conjugation $c_{w,Y}$; as mentioned in the paragraph before Theorem \ref{thm-day}, in terms of Whitehead automorphisms we write $$c_{w,Y}=(Y\cup Y^{-1}\cup\{w\},w).$$ 
We claim that for any pair $v_1,v_2\not\in\st(w)$ such that $v_1\in Y$ and $[v_1]= [v_2]$ we must have $v_2\in Y$. Indeed, since $v_1\not\leq w$, there exists some $z\in\lk(v_1)$ with $z\not\in\st(w)$. But $v_1\sim v_2$, and hence $z\in\st(v_2)$. Therefore  $v_1$ and $v_2$ are connected in $\Gamma-\st(w)$ and so  $v_2\in Y$, as desired. 

In the light of the above claim, any connected component of $\Gamma - \st(w)$ is a union of sets of the form $[v]\cap(\Gamma-\st(w))$. Consider any graphic automorphism $\sigma\in\operatorname{Sym}^1( A_\Gamma)$. As $[w]=\{w\}$ we must have $\sigma(w)=w$. It also follows from the definition of $\operatorname{Sym}^1(A_\Gamma)$  that, for any vertex $v$ and any $\sigma \in \operatorname{Sym}^1(A_\Gamma)$, $\sigma$ preserves each $[v]\cap(\Gamma-\st(w))$ setwise, and hence also every connected component of $\Gamma - \st(w)$.

Let $\pi_Y$ be the map which from the set of Whitehead automorphisms to $\Z$ which is defined as follows: given a  Whitehead automorphism $g\in \Aut^1(A_\Gamma)$, we set
$$\pi_Y(g)=\left \{\begin{aligned}
&1\text{ if }g=(A,w)\text{ with }Y\cup Y^{-1}\subseteq A\\
-&1\text{ if }g=(A,w^{-1})\text{ and }Y\cup Y^{-1}\subseteq A\\
&0\text{ otherwise.}  \\
\end{aligned}
\right.$$
We claim that $\pi_Y$ yields a well defined surjective homomorphism $\Aut^1(A_\Gamma\to\Z)$. In order to prove the claim,  we go through the list of Day's relations (R1) -- (R9) described in Theorem \ref{thm-day}, and  check that each of them is preserved by $\pi_Y$. Observe it is immediate that $\pi_Y$ preserves (R1), (R2), (R3), (R7)' and (R9). Relation (R5) is  trivial since there is no $v\ne w$ with $v\sim w$, from the hypotheses on $\Gamma$, and therefore there is nothing to verify. Finally, we check that $\pi_Y$ preserves (R4); to this end, the only problematic case is \[ (B, v)(A,w)(B,v)^{-1} =  (A,w)((B - v) \cup w, w),\] whenever $\{w,w^{-1}\} \cap B = \emptyset$, $v \notin A$, $v^{-1} \in A$,  and at least one of $A \cap B = \emptyset$ or $ v \in \lk(w)$ holds. Observe that, unless $Y\cup Y^{-1} \subset B$, this relator is trivially preserved by $\pi_Y$, so we assume that $Y\cup Y^{-1} \subset B$. At this point, the fact that $(B,v)$ and $((B - v) \cup w, w)$ are both defined implies that $v\le w$; see, for instance, Lemma 2.5 of \cite{Day} for a proof. By the hypotheses on $\Gamma$, we deduce that $v=w$, which contradicts the conditions on $A$ and $B$ of relation (R4). 
%

Next, (R6)' is obviously preserved by $\pi_Y$ since all the elements in $\operatorname{Sym}^1(A_\Gamma)$ fix $w$, because $w$ is thin.

Now, consider (R10). The only case when this relator gives problems is when there is some $(A,w)$ or $(A,w^{-1})$ well defined and a $v\in L$ with $v\in A$, $v^{-1}\not\in A$ because the we would have 
$$(A,w)(L-v^{-1},v)(A,w)^{-1}=(L-v^{-1},v)(L-w^{-1},w)$$
or the same thing but with $(A,w^{-1})$ instead of $(A,w)$. But then the condition $v\in A$, $v^{-1}\not\in A$ implies, again by Lemma 2.5 of \cite{Day}, that $v\leq w$ which is impossible unless $v=w^{\pm 1}$. But this case has to be excluded from (R10) since in other case it would mean that all the conjugations $(L-v^{-1},v)$ vanish (in fact in that case we would be in $\Out(A_\Gamma)$).

Now, assume that $\Gamma - \st(w)$ is not connected. Then we may find a  connected component $Z\ne Y$. We may repeat the construction above, obtaining a surjective homomorphism $\pi_Z: \Aut^1(A_\Gamma) \to \Z$. 
We claim that  map $$\pi:= \pi_Y - \pi_Z :H\to\Z$$ can be lifted to a well-defined surjective homomorphism $\Out^1(A_\Gamma) \to \Z$; to this end, the only  thing to verify is that $\pi$ annihilates all the conjugations $(L-v^{-1},v)$. The definition of $\pi_L$ and $\pi_K$ implies that we need to check only that 
$\pi(L-w^{-1},w)$ and $\pi(L-w,w^{-1})$ vanish. But as $Y\cup Y^{-1}$ and $Z\cup Z^{-1}$ are contained in  $L-w^{-1}$, we have
\begin{equation} \pi(L-w^{-1},w)=\pi_Y(L-w^{-1},w)-\pi_Z(L-w^{-1},w)=0,
\label{eq:tau1}
\end{equation} and
\begin{equation} \pi(L-w,w^{-1})=\pi_Y(L-w,w^{-1})-\pi_Z(L-w,w^{-1})=0,
\label{eq:tau2}
\end{equation} as desired. This finishes the proof of Theorem \ref{thm-indic2}.
\end{proof}

\begin{remark}
The reason why the proof above breaks if one considers $\Aut^0(A_\Gamma)$ instead of $\Aut^1(A_\Gamma)$ is precisely relator  (R6'). Indeed, we are using the fact that all the elements in $\operatorname{Sym}^1(A_\Gamma)$ fix $w$, because $w$ is thin; however, this need not be true if we take $\operatorname{Sym}^0(A_\Gamma)$ instead. 
\end{remark}

Finally, as we also mentioned in the introduction, it is easy to give a characterization of when the hypotheses of Theorem 
 \ref{thm-indic2} are satisfied in the  case when the graph $\Gamma$ is a tree. Recall that a {\em leaf} of a tree is a vertex whose link has exactly one element. We have:

 \begin{proposition} Let $\Gamma$ be a connected tree. Then $\Gamma$ the following conditions related to the hypotheses of Theorem  \ref{thm-indic2} are equivalent:
 \begin{itemize}
\item[i)] there is a minimal vertex $w$,

\item[ii)] there is a minimal vertex $w$ such that  $\Gamma-\st(w)$ is disconnected,

\item[iii)] there is a vertex $w$ whose distance to each leaf of $\Gamma$ is at least 3.
\end{itemize}
 \label{tree-charac}
\end{proposition}
\begin{proof} Observe that for any vertex $w$ there is some $v\neq w$ with $v\leq w$ if and only if $d(v,w) \in \{1,2\}$  and $v$ is a leaf.  Therefore the existence of a minimal vertex $w$ is equivalent to the existence of a vertex at distance at least 3 to each leaf. Moreover, if the distance from a vertex $w$ to each leaf is at least 3, then $\Gamma-\st(w)$ is disconnected.
\end{proof}

\section{Appendix. On the linearity problem for $\Aut(A_\Gamma)$}
\label{appendix}

A question of Charney (see Problem 14 of \cite{Charney})  asks for which graphs $\Gamma$ is $\Aut(A_\Gamma)$ linear; recall that a group $H$ is said to be linear if there is an injective homomorphism $H \to \GL(n,K)$ for some field $K$ and some $n>0$. 

During our work, we noticed that if the graph $\Gamma$ satisfies a certain (drastic) weakening of property (B), then an argument of Formanek-Procesi \cite{FP} immediately yields that $\Aut(A_\Gamma)$ is not linear, thus offering a refinement of Charney's question. 

We give a short account of Formanek-Procesi's result for the sake of completeness, following the summary given in \cite{BH}. Let $H$ be a group, and let $v_1,v_2, v_3\in H$ such that $\langle v_1,v_2,v_3 \rangle \cong F_3$, the free group on three generators. Let $\alpha_1,\alpha_2 \in \Aut(H)$ be such that 

$$\Bigg\{\begin{aligned}
\alpha_i(v_j)&=v_j, {\text{ for }} i,j=1,2\\
\alpha_i(v_3)&=v_3v_i,  {\text{ for }} i=1,2.\\
\end{aligned}.$$

The group $\langle \alpha_1, \alpha_2\rangle$ is called a {\em poison subgroup} of $\Aut(H)$; observe that  $\langle \alpha_1, \alpha_2\rangle\cong F_2$. The result of Formanek-Procesi \cite{FP} asserts: 

\begin{theorem}[\cite{FP}]
Let $H$ be a group. If $\Aut(H)$ contains a poison subgroup, then $\Aut(H)$ is not linear. 
\end{theorem}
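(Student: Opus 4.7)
The plan is to construct an explicit subgroup of $\Aut(H)$ known to be non-linear; since every subgroup of a linear group is linear, this yields the theorem.

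First, I would combine the poison generators with the inner automorphisms of $H$. Writing $\iota_v(x) = vxv^{-1}$ and using the standard identity $\alpha\,\iota_v\,\alpha^{-1} = \iota_{\alpha(v)}$, the hypotheses on $\alpha_i$ immediately give
\[
[\alpha_i, \iota_{v_j}] = 1 \quad (i,j \in \{1,2\}), \qquad \alpha_i\,\iota_{v_3}\,\alpha_i^{-1} \;=\; \iota_{v_3 v_i} \;=\; \iota_{v_3}\iota_{v_i} \quad (i=1,2).
\]
Since $\langle v_1,v_2,v_3\rangle \cong F_3$ has trivial center, its intersection with $Z(H)$ is trivial, so the subgroup $\langle \iota_{v_1}, \iota_{v_2}, \iota_{v_3}\rangle \le \Inn(H) \le \Aut(H)$ is itself free of rank $3$.

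Let $G \le \Aut(H)$ be the subgroup generated by $\alpha_1, \alpha_2, \iota_{v_1}, \iota_{v_2}, \iota_{v_3}$. The relations above realize $G$ as an explicit semidirect product $F_3 \rtimes F_2$, with normal subgroup $F_3 = \langle \iota_{v_1}, \iota_{v_2}, \iota_{v_3}\rangle$ and complement $F_2 = \langle \alpha_1, \alpha_2\rangle$ acting on $F_3$ by the formulas displayed above (the two generators of $F_2$ fix the first two generators of $F_3$ and translate the third by the corresponding inner automorphism).

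The heart of the matter -- and the main obstacle -- is to show this specific $F_3 \rtimes F_2$ is not linear over any field; this is the technical content of Formanek and Procesi's original paper. The argument proceeds by contradiction from a supposed faithful representation $\rho \colon G \to \GL(n,K)$. Setting $A = \rho(\alpha_1)$, $B = \rho(\alpha_2)$, $X = \rho(\iota_{v_3})$, $Y_i = \rho(\iota_{v_i})$, the commutativity of $A, B$ with $Y_1, Y_2$ together with the relations $AXA^{-1} = XY_1$, $BXB^{-1} = XY_2$ makes the map $\phi(w) = X^{-1}wXw^{-1}$ a well-defined group homomorphism from $\langle A, B\rangle$ to $\langle Y_1, Y_2\rangle$, sending $A \mapsto Y_1$, $B \mapsto Y_2$ (the cocycle identity $\phi(w_1 w_2) = \phi(w_1)\cdot w_1\phi(w_2)w_1^{-1}$ reduces to a homomorphism precisely because $\langle A, B\rangle$ centralizes $\langle Y_1, Y_2\rangle$). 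Faithfulness of $\rho$ then forces $\phi$ to be an isomorphism of non-abelian free groups. A careful analysis of the interplay between these two commuting copies of $F_2$ and the conjugation action on $X$, via eigenvalue/trace arguments combined with the Tits alternative applied inside the Zariski closure of $\rho(G)$, yields the desired contradiction. Once $G$ is known to be non-linear, the same is true of $\Aut(H) \supseteq G$, completing the proof.
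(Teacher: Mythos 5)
The paper itself does not prove this statement---it is quoted from Formanek--Procesi \cite{FP} and used as a black box---so the comparison has to be with the argument of \cite{FP}. Your setup reproduces the first half of that argument correctly: adjoining the inner automorphisms $\iota_{v_1},\iota_{v_2},\iota_{v_3}$ (conjugations, in your notation) to the poison subgroup, checking $[\alpha_i,\iota_{v_j}]=1$ and $\alpha_i\iota_{v_3}\alpha_i^{-1}=\iota_{v_3}\iota_{v_i}$, using triviality of the centre of $F_3$ to see that $\langle\iota_{v_1},\iota_{v_2},\iota_{v_3}\rangle$ is free of rank $3$, and converting the cocycle $w\mapsto X^{-1}wXw^{-1}$ into a genuine surjective homomorphism $\langle A,B\rangle\to\langle Y_1,Y_2\rangle$. (A minor quibble: you do not verify $\langle\alpha_1,\alpha_2\rangle\cap\langle\iota_{v_1},\iota_{v_2},\iota_{v_3}\rangle=1$, so your $G$ is a priori only a quotient of $F_3\rtimes F_2$ containing $F_3$; this is harmless, since what is actually used is only that $G$ contains elements satisfying the displayed relations with $\langle\iota_{v_1},\iota_{v_2},\iota_{v_3}\rangle$ free of rank $3$.)

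The genuine gap is your last paragraph. The assertion that this specific group admits no faithful linear representation \emph{is} the theorem, and deferring it to ``the technical content of Formanek and Procesi's original paper'' while gesturing at ``eigenvalue/trace arguments combined with the Tits alternative'' is a citation, not a proof. Moreover, the Tits alternative cannot supply the contradiction: $G$ is (free)-by-(free), visibly contains nonabelian free subgroups, and therefore satisfies the conclusion of the Tits alternative whether or not it is linear. The mechanism in \cite{FP} is different and has to be written out: from $wXw^{-1}=X\phi(w)$ and surjectivity of $\phi$ one sees that every matrix $Xu$ with $u\in\langle Y_1,Y_2\rangle$ is conjugate to $X$, hence shares its characteristic polynomial; feeding this constancy of traces into the subalgebra of the matrix algebra spanned by $\rho(\langle\iota_{v_1},\iota_{v_2}\rangle)$ (a Burnside-type argument with the trace form) forces that image to have unipotent---hence nilpotent---derived subgroup, which is incompatible with $\rho$ being injective on the rank-two free group $\langle\iota_{v_1},\iota_{v_2}\rangle$. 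Until that step is supplied, what you have is a correct reduction to the key lemma of \cite{FP}, together with an inaccurate sketch of why that lemma holds.
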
 

In \cite{FP}, Formanek-Procesi used the result above to prove that the (outer) automorphism group of the free group $F_n$ on $n\ge 3$ generators contains poison subgroups, and thus is not linear. 
We now mimic their reasoning, and introduce a property of a simplicial graph $\Gamma$ that guarantees that  $\Aut(A_\Gamma)$ has a poison subgroup, and thus is not linear either. 

\begin{definition}
Let $\Gamma$ be a simplicial graph. We say that $\Gamma$ satisfies property (NL) if there exist pairwise non-adjacent vertices $v_1, v_2, v_3 \in V(\Gamma)$ such that 
$v_3 \le v_i$, for $i=1,2$. 
\label{defN}
\end{definition}

We now prove Proposition \ref{nonlinear}, whose statement we now recall: 

\begin{named}{Proposition \ref{nonlinear}}
Let $\Gamma$ be a simplicial graph that satisfies property (NL). Then $\Aut(A_\Gamma)$ contains a poison subgroup and thus is not linear. 
\end{named}

\begin{proof}
Let $\Gamma$ be a simplicial graph with property (NL), and let $v_1,v_2,v_3\in V(\Gamma)$ be three pairwise non-adjacent vertices; by definition, $$\langle v_1,v_2,v_3 \rangle \cong F_3.$$ Since $v_3 \le v_i$ for $i=1,2$, we may consider now the transvections $t_{v_3v_i}$ 
for $i=1,2$. It  follows that the group generated by $\alpha_1:= t_{v_3v_1}$ and $\alpha_2:=t_{v_3v_2}$ is a poison subgroup of $\Aut(A_\Gamma)$, as desired. 
\end{proof}


\begin{remark}
Property (NL) is more frequent than one could in principle have thought. For instance, it is satisfied as soon as $\Gamma$ has three isolated vertices, in which case $\Aut(F_3) < \Aut(A_\Gamma)$. We remark, however, that an arbitrary graph with property (NL) need not have
 any isolated vertices, see Figure \ref{linear} for an example.

\begin{figure}[htb]
\begin{center}

\includegraphics[width=3.3in,height=1.2in]{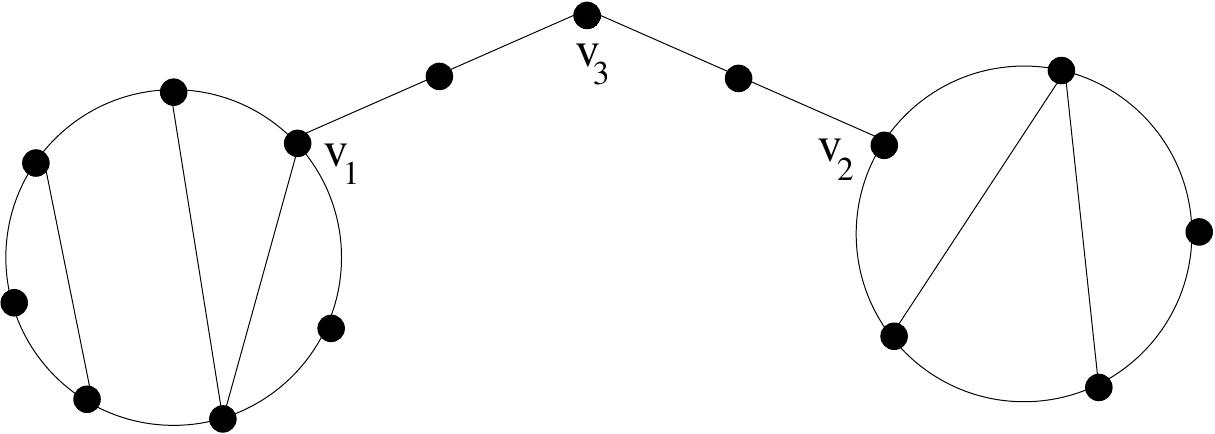} \caption{A graph $\Gamma$ with property (NL) and no isolated vertices } \label{linear}
\end{center}
\end{figure}

Following Erd\H{o}s-R\'enyi \cite{ER}, we denote by $G(n,N)$ the space of graphs with $n$ vertices and $N$ edges, where we choose graphs $\Gamma_{n,N}\in G(n,N)$ at random with respect to the uniform probability distribution in $G(n,N)$. Fix a constant $c \in \R$, and set $N(n) = \frac{1}{2} n \log(n) + c n$. Erd\H{o}s-R\'enyi \cite{ER} showed that the probability $P_{n, N(n)}(k)$ that a random graph $\Gamma_{n,N(n)}$ has $k$ isolated vertices behaves, as $n$ grows, as a Poisson distribution with parameter $\lambda= e^{-2c}$; see Theorem 2c of \cite{ER}. More concretely one has: 

 $$\displaystyle{\lim_{n\to \infty}} P_{n, N(n)}(k) = \frac{\lambda^ke^{-\lambda}}{k!}.$$ 

In the light of this result, the probability that a random graph $\Gamma_{n,N(n)}$ satisfies property (NL) is  strictly bigger than $$\frac{\lambda^3e^{-\lambda}}{3!}, $$ where again $\lambda= e^{-2c}$. Thus we see that, at least for that particular $N(n)$, a definite (albeit small) proportion of random graphs satisfy property (NL).
\end{remark}


\begin{thebibliography}{100}



\bibitem{BMS} H. Bass, J. Milnor, J.P. Serre, Solution of the congruence subgroup problem for $\SL_n$ ($n\ge 3$) and ${\rm Sp}_{2n}$ ($n\ge 2$). {\em Inst. Hautes \'Etudes Sci. Publ. Math.} No. 33 (1967).

\bibitem{BHV} B. Bekka, P. de la Harpe, A. Valette, Kazhdan's property (T). {\em New Mathematical Monographs, 11}. Cambridge University Press, Cambridge, 2008

\bibitem{BV} O. Bogopolski, R. Vikentiev, Subgroups of small index in $\Aut(F_n)$ and Kahzdan's Property (T).  {\em Combinatorial and geometric group theory}, 117, Trends Math., Birkh\"auser/Springer Basel AG, Basel, 2010.

\bibitem{BH} T. Brendle, H. Hamidi-Tehrani, On the linearity problem for mapping class groups, {\em Algebraic and Geometric Topology} 1 (2001)

\bibitem{Bridson} M. R. Bridson, The rhombic dodecahedron and semisimple actions of $\Aut(F_n)$ on ${\rm CAT}(0)$ spaces, {\em Fund. Math.} 214 (2011). 

\bibitem{Charney} R. Charney, Problems related to Artin Groups, {\em American Institute of Mathematics, Problems in Geometric Group Theory.} Available from {\url {http://people.brandeis.edu/~charney/webpubs.html}}.

\bibitem{Cornulier} Y. de Cornulier. Finitely presentable, non-Hopfian groups with Kazhdan's Property (T) and infinite outer automorphism groups. Proc. Amer. Math. Soc.


\bibitem{Day} M. Day. Peak reduction and finite presentations for automorphism groups of right angled Artin groups, {\em  Geom. Topol.} 13 (2009)

\bibitem{Day2} M. Day. Symplectic structures on right-angled Artin groups: between the mapping class group and the symplectic group. Geom. and Topol. 13 (2), 857-899.


\bibitem{Droms} C. Droms, Isomorphisms of graph groups. 
{\em Proc. Amer. Math. Soc.} 100 (1987)


\bibitem{ER} P. Erd\H{o}s, A. R\'enyi, On random graphs. I. {\em Publ. Math. Debrecen} 6 1959.

\bibitem{FM} B. Farb, H. Masur, Superrigidity and mapping class groups.  {\em Topology} 37 (1998).

\bibitem{Fernos} T. Fern\'os. Relative property (T) and linear groups. Ann. Inst. Fourier (Grenoble) 56 (2006), no. 6, 1767--1804.

\bibitem{FP} E. Formanek, C. Procesi, The automorphism group of a free group is not linear. {\em J. Algebra} 149 (1992). 

\bibitem{GL}  F. Grunewald and A. Lubotzky, Linear representations of the automorphism group
of  free groups. {\em Geometric and Functional Analysis (GAFA)},18 (2009).

\bibitem{Kahzdan} D. Kazhdan, On the connection of the dual space of a group with the structure of its closed subgroups, {\em  Funkcional. Anal. i Prilo≈æen. 1} (1) 1967.


\bibitem{Laurence} M. R. Laurence, A generating set for the automorphism group of a graph group, {\em J. London Math. Soc.} (2) 52 (1995).

\bibitem{LS} R. Lyndon, P. E. Schupp,  {\em Combinatorial group theory.} Springer-Verlag, Berlin-New York, 1977

\bibitem{Margulis} G. A. Margulis, {\em Discrete subgroups of semisimple Lie groups.} Ergebnisse der Mathematik und ihrer Grenzgebiete (3), 17. Springer-Verlag, Berlin, 1991

\bibitem{McCool} J. McCool, A faithful polynomial presentation of $\Out(F_3)$. Math. Proc. Camb. Phil.
Soc., 106 (1989)


\bibitem{McCarthy} J. McCarthy, On the first cohomology group of cofinite subgroups in surface mapping class groups. {\em Topology} 40 (2001).

\bibitem{Putman} A. Putman, An infinite presentation of the Torelli group. {\em Geom. Funct. Anal.} 19 (2009). 

\bibitem{satoh} T. Satoh, The abelianization of the congruence IA-automorphism group of a free group. {\em Math. Proc. Cambridge Philos. Soc.} 142 (2007),

\bibitem{Servatius} H. Servatius, Automorphisms of graph groups, {\em J. Algebra} 126 (1989).

\bibitem{Wade} R. Wade, Thesis. Available from {\url{ http://www.math.utah.edu/~wade/research.html}}


\end{thebibliography}
\end{document}